\documentclass[11pt]{article}
\usepackage[final]{epsfig}
\usepackage[left=3cm,right=3cm, top=2.5cm,bottom=2.5cm,bindingoffset=0cm]{geometry}
\usepackage{graphics}
\usepackage{amsmath}
\usepackage{amsfonts}
\usepackage{latexsym}
\usepackage{amssymb, mathabx}
\usepackage{amsthm}
\usepackage{graphicx}
\usepackage{epstopdf}
\usepackage{multicol,multirow}
\usepackage{hyperref, enumitem}
\usepackage{mathtools}

 \usepackage{relsize}

\usepackage[nottoc]{tocbibind}

\usepackage{amsthm}

\makeatletter
\renewenvironment{proof}[1][\proofname]{%
  \par\pushQED{\qed}%
  \normalfont \topsep6\p@\@plus6\p@\relax
  \trivlist
  \item[\hskip\labelsep\bfseries #1\@addpunct{.}]%
}{%
  \popQED\endtrivlist\@endpefalse
}
\makeatother

\mathtoolsset{showonlyrefs}

\usepackage{tikz}
\usepackage{tikz-cd} 
\usetikzlibrary{positioning}
\usetikzlibrary{decorations.text}
\usetikzlibrary{decorations.pathmorphing}
\usetikzlibrary{decorations.markings}

\usetikzlibrary{arrows} 
\tikzset{
    >=stealth',
    pil/.style={
           ->,
           thick,
           shorten <=2pt,
           shorten >=2pt,}
}
\tikzset{->-/.style={decoration={
  markings,
  mark=at position .7 with {\arrow{>}}},postaction={decorate}}}
  \tikzset{a/.style={decoration={
  markings,
  mark=at position .52 with {\arrow{angle 90}}},postaction={decorate}}}
\tikzset{-<-/.style={decoration={
  markings,
  mark=at position .4 with {\arrow{<}}},postaction={decorate}}}

\makeatletter
\def\thmhead@plain#1#2#3{%
  \thmname{#1}\thmnumber{\@ifnotempty{#1}{ }\@upn{#2}}%
  \thmnote{ {\the\thm@notefont#3}}}
\let\thmhead\thmhead@plain

\makeatother

\newcounter{AppCounter}

\def\restrict#1{\raise-.5ex\hbox{\ensuremath|}_{#1}}

\newtheorem{lemma}{Lemma}[section]
\newtheorem{proposition}[lemma]{Proposition}

\newtheorem{remark-definition}[lemma]{Remark-Definition}
\newtheorem{theorem}[lemma]{Theorem}
\newtheorem{corollary}[lemma]{Corollary}

\newtheorem{proposition-conjecture}[lemma]{Proposition-conjecture}

\theoremstyle{definition}
\newtheorem{example}[lemma]{Example}

\newtheorem{definition}[lemma]{Definition}
\newtheorem{remark}[lemma]{Remark}

\newcommand{\R}{{\mathbb R}}

\newcommand{\B}{  B}

\newcommand{\BVect}{\mathrm{BVect}}

\newcommand{\Ker}{\mathrm{ker}}

\newcommand{\T}{{T}}
\newcommand{\Cont}{{C}}
\newcommand{\LieBracket}{ [\, , ] }

\newcommand{\id}{\mathrm{id}}

\newcommand{\Vect}{\mathrm{Vect}}
\newcommand{\Diff}{\textnormal{Diff}} 

\newcommand{\Src}{\mathrm{src}}
\newcommand{\Trg}{\mathrm{trg}}

\newcommand{\A}{{A}}

\newcommand{\low}[1]{\raise-.0ex\hbox{$\scriptstyle #1$}}
\newcommand{\high}[1]{\raise.5ex\hbox{$\scriptstyle #1$}}

\newcommand{\dbo}{\left[\!\left[}
\newcommand{\dbc}{\right]\!\right]}

\renewcommand{\tfrac}{\frac}

\newcommand{\marginnote}[1]
{
}

\newcounter{bk}

\title {Virasoro extensions for diffeomorphisms with breaks}

\author{Anton Izosimov\thanks{
School of Mathematics and Statistics,
University of Glasgow;
e-mail: {\tt anton.izosimov@glasgow.ac.uk}
},
Boris Khesin\thanks{
Department of Mathematics,
University of Toronto;
e-mail: \tt{khesin@math.toronto.edu}
}, 
and Howard Xiao\thanks{Department of Electrical Engineering, Stanford University; e-mail: \tt{howardx@stanford.edu}}
}

\date{}

\begin{document}

\maketitle
\begin{abstract}
We study homeomorphisms of the circle that are smooth diffeomorphisms away from a finite set of 
$
n$ points. These “broken diffeomorphisms” do not form a Lie group, but instead naturally assemble into a Lie groupoid. We construct an explicit nontrivial 
$n$-dimensional central extension of this groupoid, which restricts to the classical Virasoro group when confined to smooth diffeomorphisms. We further describe the associated “broken Virasoro” algebroid, defined as a nontrivial  $n$-dimensional central extension of the Lie algebroid of vector fields on the circle that are smooth except at 
$n$ points. This construction generalizes the Virasoro algebra.

As a byproduct, we analyze a related setting on an interval: we construct a nontrivial central extension of the Lie algebra of vector fields vanishing at the endpoints, together with the corresponding central extension of the group of diffeomorphisms fixing the endpoints. We also describe the associated Lie algebroid and groupoid obtained by allowing the endpoints to vary. 

\end{abstract}

\tableofcontents

\section{Introduction} \label{intro}

The Virasoro algebra and its associated group are among the most remarkable structures in modern mathematical physics, arising naturally in conformal field theory, string theory, integrable systems, and many other contexts. 

The \emph{Virasoro algebra} is the unique nontrivial central extension of the Lie algebra of vector fields $u(x)\partial_x$ on the circle $S^1$ defined by the \emph{Gelfand-Fuchs 2-cocycle}
\[
\Omega(u\partial_x, v\partial_x) = \int_{S^1} u\, v_{xxx}\, dx.
\]
The corresponding \emph{Virasoro-Bott group} is the unique extension of the group of circle diffeomorphisms  $x \mapsto \phi(x)$ defined by the \emph{Bott 2-cocycle}
\[
\chi(\phi, \psi) = \int_{S^1} \log\big((\phi \circ \psi)_x\big)\, d\log(\psi_x).
\]

In this paper, we consider homeomorphisms of the circle that are smooth diffeomorphisms away from a finite set of points $p_1, \dots, p_n$. Such maps are well known in dynamical systems, where they, in particular, arise in the context of renormalization~\cite{Khanin2022}. The goal of this paper is to define analogues of the Virasoro group and algebra in this setting. On the infinitesimal level, homeomorphisms that are smooth diffeomorphisms away from \(n\) points correspond to continuous vector fields that are smooth away from \(n\) points. The space of such vector fields is not a Lie algebra, as it is not closed under the usual Lie bracket. Indeed, the bracket of two such fields may fail to be continuous. To address this, we equip the space \(\BVect_n(S^1)\) of continuous vector fields on \(S^1\) that are smooth away from \(n\) points with the more general structure of a \emph{Lie algebroid}. This algebroid is a natural generalization of the Lie algebra of smooth vector fields on the circle. Moreover, this Lie algebroid admits a nontrivial \(2\)-cocycle, which can be viewed as an algebroid analogue of the Gelfand--Fuchs cocycle.


\begin{theorem} ($=$Theorem \ref{thm:GFA})
The algebroid \(\BVect_n(S^1)\) of continuous vector fields on \(S^1\) that are smooth away from \(n\) points admits \(n\) nontrivial \(2\)-cocycles, given by a variant of the Gelfand--Fuchs cocycle:
\[
\Omega_i(u\partial_x, v\partial_x)
= \int_{p_i}^{p_{i+1}} \bigl(u_x v_{xx} - u_{xx} v_x\bigr)\,dx,
\]
where \(p_1, \dots, p_n\) are the points at which the vector fields \(u\partial_x\) and \(v\partial_x\) fail to be smooth.
\end{theorem}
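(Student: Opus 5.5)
The plan is to verify the cocycle identity for the algebroid differential by a direct computation, and then to prove nontriviality by restricting to an isotropy Lie algebra, where the question reduces to a classical fact about the Gelfand--Fuchs cocycle.

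\textbf{Setup.} I use the algebroid structure on $\BVect_n(S^1)$ as I expect it to be defined in the body of the paper (I have not seen the definition, so this is an assumption).
\begin{itemize}
\item The base is the space of configurations $p=(p_1,\dots,p_n)$ of cyclically ordered points on $S^1$.
\item Sections are families $u(p)\partial_x$ of continuous fields, smooth on each arc $[p_i,p_{i+1}]$.
\item The anchor is $\rho(u)=(u(p_1),\dots,u(p_n))$, the velocities of the break points.
\item The bracket is $[u,v]=uv_x-vu_x+\rho(u)\cdot\partial_p v-\rho(v)\cdot\partial_p u$.
\end{itemize}
The derivative terms in $p$ are exactly what restores continuity at the breaks.

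\textbf{Step 1: the cocycle identity.} I must check that for all sections $u,v,w$
\[
\sum_{\mathrm{cyc}}\Bigl(\rho(u)\,\Omega_i(v,w)-\Omega_i([u,v],w)\Bigr)=0 .
\]
I split $\rho(u)\,\Omega_i(v,w)$ into two parts.
\begin{itemize}
\item \emph{Boundary terms.} Differentiating the moving limits of integration gives
\[
u(p_{i+1})\,(v_xw_{xx}-v_{xx}w_x)(p_{i+1}^-)-u(p_i)\,(v_xw_{xx}-v_{xx}w_x)(p_i^+).
\]
\item \emph{Interior terms.} These are integrals of $\partial_p v$ and $\partial_p w$ against the cocycle.
\end{itemize}
The interior terms cancel, after cyclic summation, against the contributions of $\rho(u)\cdot\partial_p v-\rho(v)\cdot\partial_p u$ inside $\Omega_i([u,v],w)$. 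This cancellation uses only bilinearity and antisymmetry of $\Omega_i$.

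What remains is the purely local expression
\[
\sum_{\mathrm{cyc}}\int_{p_i}^{p_{i+1}}\Omega\text{-density}(uv_x-vu_x,\,w).
\]
This is the same expression that appears in the classical Gelfand--Fuchs check, now on an interval where all functions are smooth up to the endpoints. The classical computation shows the integrand is an exact derivative $\partial_x F(u,v,w)$. Hence the expression equals $F(p_{i+1}^-)-F(p_i^+)$.

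The main obstacle is bookkeeping: I need $F$ explicitly, and then to check that $F$ matches the anchor boundary terms with the correct sign. I will compute $F$ by expanding $(uv_x-vu_x)_x w_{xx}-(uv_x-vu_x)_{xx}w_x$ and summing cyclically. I expect $F$ to be exactly the cyclic sum of $u\,(v_xw_{xx}-v_{xx}w_x)$, which would give the cancellation.

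\textbf{Step 2: nontriviality.} Suppose a combination $\sum c_i\Omega_i$ equals $d\alpha$ for some $C^\infty(\text{base})$-linear $\alpha$.
\begin{itemize}
\item Fix a configuration $p$ and restrict to sections with zero anchor, i.e. fields vanishing at all $p_i$. On this isotropy Lie algebra the bracket is the pointwise one, so the restriction of $d\alpha$ is the ordinary Lie-algebra coboundary $-\alpha([u,v])$.
\item Restrict further to fields compactly supported in the open arc $(p_j,p_{j+1})$. For such fields $\Omega_i$ vanishes when $i\neq j$. Integration by parts gives
\[
\Omega_j(u,v)=2\int u_xv_{xx}\,dx=-2\int uv_{xxx}\,dx,
\]
which is a multiple of the Gelfand--Fuchs cocycle on $\Vect_c$ of an interval.
\item That cocycle is not a coboundary, by the classical argument: evaluate it on suitable localized fields that realize $\mathfrak{sl}_2$-type relations, or use the known nontriviality of Gelfand--Fuchs on $\Vect_c(\R)$.
\end{itemize}
Therefore $c_j=0$ for every $j$. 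This shows both that each $\Omega_i$ is nontrivial and that the $n$ classes are linearly independent.
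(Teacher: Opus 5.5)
Your proposal is correct. The cocycle verification is the paper's argument. The nontriviality step reaches the same reduction to an isotropy algebra as the paper, but then goes through a different key lemma.

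\textbf{Cocycle identity.} The $p$-derivative terms of the bracket combine by antisymmetry with the interior part of $L_{\#\mathbf u}\Omega_i(\mathbf v,\mathbf w)$. The remaining local part is the total derivative of $F=\sum_{\circlearrowleft}u(v_xw_{xx}-v_{xx}w_x)$. Your expected $F$ is exactly the boundary expression the paper obtains, and it does cancel the endpoint terms with the correct sign.

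\textbf{Nontriviality.} The paper restricts to the full isotropy algebra $\bigoplus_i\Vect_0([p_i,p_{i+1}])$. It then invokes Theorem~\ref{NCB}, which it proves by an explicit computation in the basis $\sin(kx)\partial_x$ of $\Vect_0([0,\pi])$. You localize further, to fields compactly supported in an open arc. There all boundary terms disappear, and $\Omega_j=-2\int uv_{xxx}\,dx$ is literally the classical Gelfand--Fuchs cocycle. This is conceptually cleaner, and it proves slightly more: the class is already nontrivial on the compactly supported subalgebra.

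\textbf{What you still need to write out.} Here $\alpha_p$ is an arbitrary linear functional on the fiber, so you need nontriviality in the purely algebraic sense. Published statements for $\Vect_c(\R)$ are usually about continuous cohomology. The localized argument you allude to does give the algebraic version, and you should include it:
\begin{enumerate}
\item Choose $h,g\in\Vect_c$ of the arc equal to $1$ and $x$ on a neighborhood of a compact interval $J$.
\item For $v$ supported in $J$ you have $[h,v]=v_x$ and $[g,v]=xv_x-v$, while $\Omega_j(h,v)=\Omega_j(g,v)=0$.
\item The coboundary relation then gives $\alpha(v_x)=0$, which forces $\alpha=c\int$ on fields supported in $J$.
\item Next, $\alpha(xv_x-v)=-2c\int v=0$ forces $c=0$.
\item Hence $\Omega_j=\pm\alpha\circ[\,,]$ would vanish on all pairs supported in $J$. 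It does not, since for instance $\Omega_j(v_x,v)=2\int v_{xx}^2>0$.
\end{enumerate}
With this short paragraph added, your route is complete and avoids the trigonometric computation. The paper's route is self-contained, but it is tied to a specific basis of $\Vect_0([0,\pi])$.
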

These \(n\) cocycles on \(\BVect_n(S^1)\) define a nontrivial \(n\)-dimensional central extension, which we call the \emph{Virasoro algebroid}.

Furthermore, we show that the set of circle homeomorphisms that are diffeomorphisms away from \(n\) points admits a natural structure of a Lie groupoid integrating the Lie algebroid \(\BVect_n(S^1)\). This groupoid, denoted \(\mathrm{BDiff}(S^1,n)\), also admits a central extension. The corresponding cocycles integrate the algebroid versions of the Gelfand-Fuchs cocycle and can be viewed as groupoid counterparts of the Bott cocycle.


\begin{theorem}(\(=\) Theorem~\ref{thm:vir-groupoid})
The groupoid \(\mathrm{BDiff}(S^1,n)\) of broken circle diffeomorphisms admits a nontrivial \(n\)-dimensional central extension given by a variant of the Bott cocycle,
\[
\chi_i(\phi, \psi)
= \int_{p_i}^{p_{i+1}} \log\bigl(\phi_x \circ \psi\bigr)\, d\log(\psi_x),
\]
where \(p_1, \dots, p_n\) are the break points.
\end{theorem}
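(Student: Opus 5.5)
We need to prove: $\chi_i$ are groupoid 2-cocycles, so they define a central extension, and this extension is nontrivial.

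\textbf{Setting up the groupoid.} Objects are $n$-point configurations $P=(p_1,\dots,p_n)$ on $S^1$. A morphism $\psi\colon P\to Q$ is a homeomorphism with $\psi(p_j)=q_j$ that is smooth, with $\psi_x>0$, on each closed arc $[p_j,p_{j+1}]$ with one-sided derivatives at the ends. Composable pairs are $P\xrightarrow{\psi}Q\xrightarrow{\phi}R$. In $\chi_i(\phi,\psi)$ the integral is over $[p_i,p_{i+1}]$ of the source of $\psi$. There $\psi$ is smooth, and $\phi_x\circ\psi$ is smooth because $\psi$ maps this arc onto $[q_i,q_{i+1}]$, on which $\phi$ is smooth. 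So $\chi_i$ is a well-defined real function on composable pairs, and it is additive over arcs.

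\textbf{Cocycle identity.} Take composable $\eta\colon P\to Q$, $\psi\colon Q\to R$, $\phi\colon R\to S$. We must show
\[
\chi_i(\phi,\psi)\circ\eta \;\text{-term}:\quad \chi_i(\psi,\eta)-\chi_i(\phi\circ\psi,\eta)+\chi_i(\phi,\psi\circ\eta)-\chi_i(\phi,\psi)=0,
\]
with each term integrated over the $i$-th arc of its own source configuration. The argument mirrors the classical Bott computation, done on one arc.

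\begin{enumerate}
\item Write $a=\log\phi_x$, $b=\log\psi_x$, $c=\log\eta_x$. The chain rule on smooth pieces gives
\[
\log(\psi\circ\eta)_x=b\circ\eta+c,\qquad \log(\phi\circ\psi)_x=a\circ\psi+b .
\]
\item Expand the four terms:
\begin{itemize}
\item $\chi_i(\psi,\eta)=\int_{[p_i,p_{i+1}]}(b\circ\eta)\,dc$;
\item $\chi_i(\phi\circ\psi,\eta)=\int(a\circ\psi\circ\eta+b\circ\eta)\,dc$;
\item $\chi_i(\phi,\psi\circ\eta)=\int(a\circ\psi\circ\eta)\,d(b\circ\eta+c)$;
\item $\chi_i(\phi,\psi)=\int_{[q_i,q_{i+1}]}(a\circ\psi)\,db$.
\end{itemize}
\item The last integral equals $\int_{[p_i,p_{i+1}]}(a\circ\psi\circ\eta)\,d(b\circ\eta)$ by the substitution $x=\eta(y)$. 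This is legitimate because $\eta$ maps $[p_i,p_{i+1}]$ diffeomorphically onto $[q_i,q_{i+1}]$; this is exactly why the endpoints are labelled consistently across the groupoid.
\item With this substitution all terms cancel identically.
\end{enumerate}

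No integration by parts is used, so the one-sided boundary values at the break points cause no trouble. The cocycle is normalized: $\chi_i(\id,\psi)=\chi_i(\phi,\id)=0$, since $\log(\id)_x=0$. Hence the multiplication
\[
(\phi,s)\cdot(\psi,t)=\bigl(\phi\circ\psi,\;s+t+\chi(\phi,\psi)\bigr),\qquad s,t\in\R^n,
\]
defines a Lie groupoid $\mathrm{BDiff}(S^1,n)\times\R^n$, a central extension by the trivial bundle $\R^n$. Smoothness of the structure maps follows from the smooth dependence of the integrals on the data.

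\textbf{Nontriviality.} This is the main obstacle. I would differentiate $\chi_i$ at the identity and show that it recovers the algebroid cocycle $\Omega_i$ of Theorem~\ref{thm:GFA}, up to a constant. Concretely, take flows $\phi=\id+su$ and $\psi=\id+tv$ and compute the antisymmetrized mixed second derivative $\partial_s\partial_t\big|_0\bigl(\chi_i(\phi,\psi)-\chi_i(\psi,\phi)\bigr)$. The integrand contributes $\int u_x\,v_{xx}\,dx$, and antisymmetrizing gives $\int_{p_i}^{p_{i+1}}(u_xv_{xx}-u_{xx}v_x)\,dx$.

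One subtlety is that the domain arc $[p_i,p_{i+1}]$ is the source of $\psi$ and may move. At first order the identity fixes the points, so the variation of the endpoints does not enter the mixed derivative at the unit.

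If $\chi_i$ were a coboundary $\delta f$ of a smooth function $f$ on arrows, its derivative would be a coboundary in the algebroid complex. That would contradict the nontriviality statement in Theorem~\ref{thm:GFA}, where a nontrivial linear combination $\sum c_i\Omega_i$ is not a coboundary. Applying this to combinations $\sum c_i\chi_i$ gives that the $n$-dimensional extension is nontrivial.

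The delicate points are:
\begin{itemize}
\item justifying the van Est-type differentiation map on groupoid cochains, i.e.\ that it sends coboundaries to coboundaries;
\item checking that the differentiated cocycle lands exactly on $\Omega_i$.
\end{itemize}
An alternative check is restriction to the isotropy group fixing $P$. There each $\chi_i$ is the Bott cocycle of an interval group, so nontriviality can be cross-checked against the interval extension mentioned in the abstract.
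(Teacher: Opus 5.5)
Your proposal is correct and follows the paper's route: the cocycle identity is proved exactly as in the paper, via the chain rule $\log(\psi\circ\eta)_x=\log\psi_x\circ\eta+\log\eta_x$ and the substitution $x=\eta(y)$ carrying $[p_i,p_{i+1}]$ onto $[\eta(p_i),\eta(p_{i+1})]$. Nontriviality is obtained, as in Theorem~\ref{prop:gr-alg-cocycle} and its corollary, by differentiating $\chi_i$ to $\Omega_i$ and using that no nonzero $\sum c_i\Omega_i$ is a coboundary (shown there by restricting to isotropy algebras and applying Theorem~\ref{NCB}); the paper also does not justify the van Est step you flag. One thing to tidy up is your flows: $\id+su$ and $\id+tv$ are generally not composable. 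The paper uses bisections $\phi^t(\cdot,\psi^s(p,p))$ instead, and the dependence on the moved base point drops out of the mixed derivative because $d\log\psi^s_x$ vanishes at $s=0$.
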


Note that in both the algebroid and groupoid settings, the corresponding cocycles must be written in the above special form, unlike the many equivalent expressions available in the smooth setting. For example, the expressions
\[
\int u\, v_{xxx}\, dx
\quad \text{and} \quad
\int \bigl(u_x v_{xx} - u_{xx} v_x\bigr)\,dx
\]
differ only by a constant factor in the smooth case. However, only the latter expression defines a cocycle in the setting with break points. Likewise, in the smooth group setting, both formulas
\[
\int \log\bigl((\phi \circ \psi)_x\bigr)\, d\log(\psi_x) \quad \text{and} \quad
\int \log\bigl(\phi_x \circ \psi\bigr)\, d\log(\psi_x), 
\]
can be used to define a cocycle, but only the latter formulation remains valid in the presence of break points.

Next, we show how these results shed a new light on the more familiar objects, the Lie group of diffeomorphisms of an interval, and its Lie algebra of vector fields on an interval vanishing at the endpoints.
(The latter Lie algebra appears in the context of the anchor map for the above Virasoro algebroid.)
We describe  central extensions of those Lie group of diffeomorphisms and Lie algebra of vector fields related to the interval, see Section \ref{sec:avatars}:

\begin{theorem}
The  diffeomorphism group $\mathrm{Diff}(I)$ for a segment $I=[a,b]$ has a nontrivial one-dimensional central extension given by the following group 2-cocycle $\chi$: for any pair of interval diffeomorphisms $\phi, \psi$
\[
\chi(\phi, \psi) = \int_{a}^{b} \log(\phi_x \circ \psi)\, d\log (\psi_x)\,.
\]
The corresponding Lie algebra $\mathrm{Vect}_0(I)$ of vector fields vanishing 
at the endpoints has a one-dimensional extension 
by means of the Lie algebra 2-cocycle
\[
\Omega\!\left(u(x)\partial_x,\, v(x)\partial_x\right)
:=
\int_{a}^{b} \bigl(u_x v_{xx} - u_{xx} v_x\bigr)\,dx
\]
for a pair of vector fields $u(x)\partial_x,\, v(x)\partial_x$.
\end{theorem}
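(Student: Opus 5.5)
The plan is to verify the group cocycle identity for $\chi$ by a direct change of variables. Then I would obtain $\Omega$ as the infinitesimal version of $\chi$, which makes the Lie algebra cocycle property automatic. Finally, nontriviality of $\chi$ would follow from nontriviality of $\Omega$, which I would prove by restricting to compactly supported fields.

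\emph{Group cocycle identity.} I need
\[
\chi(\phi\circ\psi,\eta)+\chi(\phi,\psi)=\chi(\phi,\psi\circ\eta)+\chi(\psi,\eta).
\]
Everything rests on the chain rule
\[
\log\big((\phi\circ\psi)_x\big)=\log(\phi_x\circ\psi)+\log\psi_x ,
\]
applied together with the fact that every element of $\mathrm{Diff}(I)$ maps $[a,b]$ onto itself, fixing $a$ and $b$. The terms expand as follows.
\begin{itemize}
\item The left-most term splits as
\[
\chi(\phi\circ\psi,\eta)=\int_a^b \log(\phi_x\circ\psi\circ\eta)\,d\log\eta_x+\int_a^b\log(\psi_x\circ\eta)\,d\log\eta_x .
\]
The second summand is exactly $\chi(\psi,\eta)$.
\item Since $d\log\big((\psi\circ\eta)_x\big)=d\log(\psi_x\circ\eta)+d\log\eta_x$, we get
\[
\chi(\phi,\psi\circ\eta)=\int_a^b\log(\phi_x\circ\psi\circ\eta)\,d\log(\psi_x\circ\eta)+\int_a^b\log(\phi_x\circ\psi\circ\eta)\,d\log\eta_x .
\]
\item Substituting $x=\eta(y)$ in $\chi(\phi,\psi)$ is legitimate because $\eta$ is an orientation-preserving diffeomorphism of $[a,b]$. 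It gives
\[
\chi(\phi,\psi)=\int_a^b\log(\phi_x\circ\psi\circ\eta)\,d\log(\psi_x\circ\eta).
\]
\end{itemize}
Comparing these expansions, the identity holds exactly. No boundary terms appear, because only a change of variables is used and no integration by parts. This is precisely why the form $\log(\phi_x\circ\psi)$ rather than $\log((\phi\circ\psi)_x)$ is the right one here.

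\emph{Lie algebra cocycle.} Put $\phi=\mathrm{id}+su$ and $\psi=\mathrm{id}+tv$ with $u,v\in\mathrm{Vect}_0(I)$, so that $\phi,\psi\in\mathrm{Diff}(I)$ for small $s,t$. To first order, $\log(\phi_x\circ\psi)\approx s\,u_x(x+tv)$ and $d\log\psi_x\approx t\,v_{xx}\,dx$. Hence the $st$-coefficient of $\chi$ is $\int_a^b u_xv_{xx}\,dx$. Antisymmetrizing gives $\Omega$. By the standard fact that the antisymmetrized second derivative at the identity of a smooth group 2-cocycle is a Lie algebra 2-cocycle, $\Omega$ is a cocycle on $\mathrm{Vect}_0(I)$ and is the derivative of $\chi$. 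As a safeguard I would also check $\Omega([u,v],w)+\text{cyclic}=0$ directly. The integrand of the cyclic sum should be an exact derivative whose boundary values vanish because $u,v,w$ vanish at $a$ and $b$.

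\emph{Nontriviality, the main obstacle.} If $\chi$ were a coboundary, then $\Omega$ would be one too, i.e.\ $\Omega(u,v)=\ell([u,v])$ for some continuous linear functional $\ell$. Hence it suffices to show $\Omega$ is nontrivial, and for that it suffices to show its restriction to a subalgebra is nontrivial. I would restrict to fields compactly supported in $(a,b)$. There, integration by parts gives
\[
\Omega(u,v)=2\int_a^b u_x v_{xx}\,dx=-2\int_a^b u\,v_{xxx}\,dx,
\]
which is, up to sign and normalization, the Gelfand--Fuchs cocycle on $\mathrm{Vect}_c(\mathbb R)$. To exhibit nontriviality concretely, I would fix a bump $\rho$ equal to $1$ on a neighborhood of a small interval $J$. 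Then I would work with $e_k=\rho\,x^{k}\partial_x$, $k=0,1,2,3$, whose brackets agree on $J$ with those of $x^{k}\partial_x$. A relation of the form $\Omega(u,v)=\ell([u,v])$ would have to hold on suitable pairs with nested supports, and an appropriate cancellation among these pairs produces the contradiction $0\neq 0$. I would proceed as in the classical argument that the $\mathfrak{sl}_2$-invariance of any $\ell$ forces the cocycle to vanish on $\mathfrak{sl}_2$, while $\Omega$ does not vanish on pairs involving $x^3\partial_x$.

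I expect this bookkeeping of cutoffs to be the only delicate step. If it resists, I would instead cite the classical nontriviality of the Gelfand--Fuchs class on $\mathrm{Vect}_c(\mathbb R)$ and conclude by restriction.
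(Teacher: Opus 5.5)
Your proof is correct, and for the cocycle identities it follows the paper; the nontriviality argument takes a different route.

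\textbf{Where you match the paper.} Your check of the identity for $\chi$ (chain rule plus the substitution $x=\eta(y)$, with no integration by parts) is the paper's argument. So is your derivation of $\Omega$ as the antisymmetrized $st$-coefficient of $\chi$. These are Theorems~\ref{thm:vir-groupoid} and~\ref{prop:gr-alg-cocycle}, specialized to one interval. Your direct check, where the cyclic sum reduces to boundary terms killed by $u,v,w$ vanishing at $a,b$, matches the proof of Theorem~\ref{thm:GFA}.

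\textbf{Where you differ: nontriviality.} The paper (Theorem~\ref{NCB}) works on $[0,\pi]$ with $e_n=\sin(nx)\partial_x$. It computes $\Omega(e_m,e_n)=\frac{2mn(m^2+n^2)}{m^2-n^2}$ for $m,n$ of different parity. It then shows the system $\frac{k^4-l^4}{4kl}=k\lambda_l-l\lambda_k$ over odd $k,l$ has no solution. You instead restrict to fields compactly supported in $(a,b)$ and use local $\mathfrak{sl}_2$-invariance.

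As written, your step is only a sketch. The fields $\rho x^k\partial_x$ do not span a subalgebra, since $[\rho f\partial_x,\rho g\partial_x]=\rho^2(fg_x-f_xg)\partial_x$. It closes cleanly if you pair $\rho x^k\partial_x$ only with fields $v\partial_x$ supported where $\rho=1$:
\begin{itemize}
\item[(i)] For such $v$ and $k=0,1,2$, one has $\Omega(\rho x^k\partial_x,v\partial_x)=0$. Hence $\ell([x^k\partial_x,v\partial_x])=0$.
\item[(ii)] The case $k=0$ gives $\ell(w\partial_x)=c\int w\,dx$ on such fields.
\item[(iii)] The case $k=1$ then gives $-2c\int v\,dx=0$ for all such $v$, so $c=0$.
\item[(iv)] Consequently $\Omega(\rho x^3\partial_x,v\partial_x)=12\int v\,dx$ would have to vanish for every such $v$, which is absurd.
\end{itemize}

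\textbf{What each route buys.} Yours shows more: the class is already nontrivial on the ideal of compactly supported fields, so it is not an endpoint effect. It also avoids Fourier bookkeeping. The paper's route is fully explicit on a concrete basis.

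\textbf{A caution on your fallback.} Identifying $(\mathrm{Vect}_c((a,b)),\Omega)$ with the Gelfand--Fuchs cocycle on $\mathrm{Vect}_c(\mathbb{R})$ requires a diffeomorphism $(a,b)\cong\mathbb{R}$. Under it the cocycle is preserved only up to a Schwarzian coboundary. This is harmless for nontriviality but should be stated; the local argument above makes the citation unnecessary anyway.
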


The nontriviality of this cocycle is proved in Theorem~\ref{NCB}.
One can also generalize those cocycles to groupoids of diffeomorphisms of intervals with moving ends and  to the corresponding algebroids, see Proposition~\ref{prop:interval}.

\medskip

Returning to diffeomorphisms with a finite number of breaks, their set can be regarded as the configuration space for a one-dimensional compressible fluid with vortex sheets,  cf.~\cite{IzoKh2017}.
Furthermore, the Virasoro group is known to be related to the classification of Hill's operators and projective structures, while the Korteweg-de Vries and Camassa-Holm equations on the circle appear as the Euler-Arnold equations on the Virasoro algebra \cite{KhMis2003, ovsienko2005projective}. In the groupoid-algebroid setting this leads to the corresponding classification of symplectic leaves in the Poisson bundle dual to the broken Virasoro algebroid and to the intriguing appearance of boundary terms. We are going to address it in a future publication.

Another direction is related to the composition of orientation-preserving diffeomorphisms 
with breaks on the circle. Note that if one does not match the break points of one broken diffeomorphism with those of the next one, the number of breaks naturally increases. By taking the union of all such images, one obtains the group of all circle diffeomorphisms with a finite number of breaks; this forms a subgroup (dense in the $C^0$-topology) of the group $Homeo(S^1)$ of circle homeomorphisms.  An ingenious construction of the Lie algebra corresponding to the group $Homeo(S^1)$ was presented in \cite{MalikovPenner1998}. This Lie algebra turns out to be the algebra of ``piecewise $\mathfrak{sl}_2$'' vector fields on the circle. The paper also raises the question of a possible relation to the Virasoro algebra, and hence, implicitly, of possible central extensions of their construction. One may hope that the groupoid approach presented below, with central extensions whose dimension is determined by the number of breaks, could provide a partial answer to this question. 

\bigskip

\textbf{Acknowledgments.} A.I. is grateful to Max Planck Institute for Mathematics in Bonn for
its hospitality and financial support. A.I. was partially supported by the Simons Foundation
through its Travel Support for Mathematicians program. The research of BK was partially supported by an NSERC Discovery Grant. 

\section{The Gelfand-Fuchs cocycle on an interval}\label{sec:avatars}

\subsection{Avatars of the Virasoro algebra}
Consider the Lie algebra \(\mathrm{Vect}(S^1)\) of smooth vector fields on the circle. Upon choosing a coordinate \(x\), its elements can be written as \(u(x)\partial_x\), where \(u \in C^\infty(S^1)\) is a smooth function.

\begin{definition}\label{def:GF-cocycle}
The Virasoro Lie algebra \(\mathrm{vir} := \mathrm{Vect}(S^1) \oplus \mathbb{R}\) is the central extension of \(\mathrm{Vect}(S^1)\) defined by the Gelfand--Fuchs cocycle \(\Omega\), which can be written in either of the following equivalent forms:
\[
\Omega(u\partial_x, v\partial_x)
= \int_{S^1} u\, v_{xxx}\, dx
= -\int_{S^1} u_x v_{xx}\, dx,
\]
where \(u(x)\partial_x\) and \(v(x)\partial_x\) are smooth vector fields on \(S^1\), see \cite{Fuchs}. 
\end{definition}
The commutator in the Virasoro algebra is given by
\[
[(u\partial_x, \alpha), (v\partial_x, \beta)]
=
\bigl((uv_x - u_x v)\partial_x,\; \Omega(u\partial_x, v\partial_x)\bigr),
\]
for any \((u\partial_x, \alpha), (v\partial_x, \beta) \in \mathrm{Vect}(S^1) \oplus \mathbb{R}\). This extension is known to be nontrivial, i.e. non-isomorphic to the direct sum \(\mathrm{Vect}(S^1) \oplus \mathbb{R}\) as a Lie algebra. Equivalently, the Gelfand--Fuchs cocycle \(\Omega\) is not a coboundary. In other words, it represents a nontrivial cohomology class in \(H^2(\mathrm{Vect}(S^1), \mathbb{R})\); see, for example, \cite{Segal}.

Now consider the Lie algebra \(\mathrm{Vect}(I)\) of smooth vector fields on an interval \(I = [a,b]\). We will be interested in its subalgebra \(\mathrm{Vect}_0(I)\) consisting of vector fields that vanish at the endpoints \(a\) and \(b\). It is straightforward to verify that this is indeed a Lie subalgebra, i.e. it is closed under the Lie bracket. Our interest in \(\mathrm{Vect}_0(I)\) stems from the fact that, unlike the full Lie algebra \(\mathrm{Vect}(I)\), it integrates to a Lie group of orientation-preserving diffeomorphisms of the interval \(I\).

It turns out that a skew-symmetrized version of the Gelfand--Fuchs cocycle allows one to define a central extension of \(\mathrm{Vect}_0(I)\):

\begin{definition}\label{def:interval-coc}
 The extended Lie algebra $ \mathrm{Vect}_0(I) \oplus \mathbb R$ of vector fields on an interval $I = [a,b]$ vanishing at the endpoints $a$ and $b$ is the central extension of 
 $\mathrm{Vect}_0(I)$ defined by the 2-cocycle
 \[
\Omega\!\left(u(x)\partial_x,\, v(x)\partial_x\right)
\;:=\;
\int_a^b \bigl(u_x v_{xx} - u_{xx} v_x\bigr)\,dx
\]
for any pair of vector fields $u\partial_x, v\partial_x \in \mathrm{Vect}_0(I)$.
\end{definition}

Note that, in order to define \(\Omega\), we use this specific form of the cocycle. While there are many equivalent expressions in the case of the circle, they may differ by boundary terms when restricted to an interval.  One can verify directly that the above skew-symmetric form of \(\Omega\) satisfies the cocycle identity. We omit this computation here, as it will be carried out later {in two different ways: in the more general setting of algebroids and in the context of differential operators on an interval.} It turns out that this indeed defines a nontrivial central extension of the Lie algebra \(\mathrm{Vect}_0(I)\):

\begin{theorem}\label{NCB}
    The cocycle $\Omega$ is not a coboundary.
\end{theorem}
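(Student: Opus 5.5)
We argue by contradiction and localize to the interior of $I$, where the boundary terms of $\Omega$ play no role. Suppose $\Omega(u\partial_x,v\partial_x)=\ell\bigl([u\partial_x,v\partial_x]\bigr)$ for some linear functional $\ell$ on $\mathrm{Vect}_0(I)$. The plan is to determine $\ell$ on fields supported in a compact subinterval of $(a,b)$ by bracketing with a field that equals $\partial_x$ there. We then show that the resulting formula for $\Omega$ on such fields scales incorrectly under dilations.

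First, fix $\delta>0$ small and choose $h\in\mathrm{Vect}_0(I)$ with $h\equiv 1$ on $[a+\delta,b-\delta]$. For any $v\partial_x$ supported in $J=(a+\delta,b-\delta)$ we have $[h\partial_x,v\partial_x]=v_x\partial_x$. On the other hand,
\[
\Omega(h\partial_x,v\partial_x)=\int_a^b\bigl(h_xv_{xx}-h_{xx}v_x\bigr)\,dx=0,
\]
since $h_x=h_{xx}=0$ on the support of $v$. Hence $\ell(v_x\partial_x)=0$ for every smooth $v$ compactly supported in $J$. A compactly supported function $w$ on $J$ is of the form $v_x$ with $v$ compactly supported in $J$ if and only if $\int w\,dx=0$. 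Therefore $\ell(w\partial_x)=c\int_J w\,dx$ for all such $w$, for some constant $c$.

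Next, for $u,v$ compactly supported in $J$, the bracket $(uv_x-u_xv)\partial_x$ is again supported in $J$. The coboundary assumption thus gives
\[
\Omega(u\partial_x,v\partial_x)=c\int(uv_x-u_xv)\,dx=2c\int uv_x\,dx.
\]
Integrating by parts with no boundary terms, the left-hand side equals $-2\int u_{xx}v_x\,dx$. Take $v=u_x$. Then we would need
\[
-2\int u_{xx}^2\,dx \;=\; 2c\int u\,u_{xx}\,dx \;=\; -2c\int u_x^2\,dx
\]
for every compactly supported $u$ in $J$.

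Finally, to contradict this, fix a nonzero bump $\varphi$ centred at some $x_0\in J$ and set $u_\lambda(x)=\varphi(\lambda(x-x_0))$ for $\lambda\ge1$, so that the support stays inside $J$. Then $\int (u_\lambda)_{xx}^2\,dx=\lambda^3\int\varphi''^2$ and $\int (u_\lambda)_x^2\,dx=\lambda\int\varphi'^2$, with both integrals on the right positive. The identity $\lambda^3\int\varphi''^2=c\lambda\int\varphi'^2$ cannot hold for all $\lambda\ge1$. Hence no such $\ell$ exists, and $\Omega$ is not a coboundary. Indeed, even its restriction to the subalgebra of fields compactly supported in the interior is nontrivial.

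The main step is the first one: identifying $\ell$ on interior fields as $c\int w\,dx$. This needs the auxiliary field $h$ to be constant on the relevant region so that the cocycle term drops out. The remaining steps are routine integrations by parts and a scaling count.
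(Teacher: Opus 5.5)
Your proof is correct, but it follows a genuinely different route from the paper's.

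\textbf{The paper's route.} The paper normalizes $I=[0,\pi]$ and works with the explicit family $e_n=\sin(nx)\,\partial_x$. It computes $[e_m,e_n]=\frac{n-m}{2}e_{m+n}+\frac{m+n}{2}e_{m-n}$ and $\Omega(e_m,e_n)=\frac{2mn(m^2+n^2)}{m^2-n^2}$ for $m,n$ of different parity. A coboundary would then produce numbers $\lambda_k=\frac12\lambda(e_k)$ satisfying $\frac{k^4-l^4}{4kl}=k\lambda_l-l\lambda_k$ for all odd $k,l$. After normalizing $\lambda_1=0$, the case $l=1$ forces $\lambda_k=-\frac{k^4-1}{4k}$. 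The remaining equations then reduce to $(k^2-1)(l^2-1)=0$, which fails, for example, at $k=5$, $l=3$.

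\textbf{Your route.} You localize instead. The auxiliary field $h\equiv 1$ on $J$ forces $\ell(w\,\partial_x)=c\int w\,dx$ on interior fields. A dilation count (cubic versus linear growth in the scaling parameter) then rules out every value of $c$.

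\textbf{What each buys.} Your argument avoids trigonometric integrals and the overdetermined linear system, and it applies verbatim to any interval, to each $[p_j,p_{j+1}]$, and to the half-line and line settings in the paper's remark. It also proves a stronger, more conceptual statement: $\Omega$ is already nontrivial on the subalgebra of fields compactly supported in the interior. On that subalgebra all the equivalent forms of the Gelfand--Fuchs cocycle coincide, so the boundary correction built into the skew-symmetrized form plays no role in the nontriviality. The paper's computation, in turn, gives explicit structure constants and cocycle values in a concrete ``Fourier'' family. It proves nontriviality on the subalgebra spanned by the $e_n$, whose nonzero elements are real-analytic and so never compactly supported in the interior. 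The two arguments therefore detect the same obstruction on quite different subalgebras.
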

    
\begin{proof}
Without loss of generality, assume \(a = 0\) and \(b = \pi\), and set \(e_n := \sin(nx)\,\partial_x\). Then
\[
[e_m, e_n] = \frac{n-m}{2}e_{n+m} + \frac{m+n}{2}e_{m-n}.
\]
Moreover,
\[
\Omega(e_m, e_n) =
\begin{cases}
\displaystyle \frac{2mn(m^2 + n^2)}{m^2 - n^2}, & \text{if \(m,n\) have different parity},\\[6pt]
0, & \text{otherwise}.
\end{cases}
\]

Suppose, for contradiction, that \(\Omega\) is a coboundary. By definition, this means that there exists a linear functional \(\lambda\) on the Lie algebra \(\mathrm{Vect}_0(I)\) such that
\[
\Omega\!\left(u(x)\partial_x,\, v(x)\partial_x\right)
= \lambda\left(\left[u(x)\partial_x,\, v(x)\partial_x\right]\right).
\]
In particular,
\begin{equation}
\Omega(e_m, e_n) = \lambda\left(\left[e_m, e_n\right]\right).
\end{equation}
Setting \(\lambda_k := \tfrac{1}{2}\lambda(e_k)\), we obtain
\[
\Omega(e_m,e_n) = (n-m)\lambda_{m+n} + (m+n)\lambda_{m-n}.
\]
Thus,
\[
\frac{2mn(m^2 + n^2)}{m^2 - n^2}
= (n-m)\lambda_{m+n} + (m+n)\lambda_{m-n}
\]
whenever \(m\) and \(n\) have different parity. Setting \(k = m+n\) and \(l = m-n\), this relation becomes
\[
\frac{k^4 - l^4}{4kl} = k \lambda_l - l \lambda_k,
\]
for all \(k,l \in 2\mathbb{Z}+1\). Note that these equations are invariant under the substitution \(\lambda_n \mapsto \lambda_n - n\lambda_1\). Indeed, \(\lambda_n = n\) solves the corresponding homogeneous equation with vanishing left-hand side. Thus, without loss of generality, we may assume \(\lambda_1 = 0\). Setting \(l = 1\), we obtain
\[
\frac{k^4 - 1}{4k} = k \lambda_1 - \lambda_k = - \lambda_k,
\]
and hence
\[
\lambda_k = - \frac{k^4 - 1}{4k}.
\]
Substituting this expression back into the original relation, one checks that
\[
\frac{k^4 - l^4}{4kl} = k \lambda_l - l \lambda_k
\]
fails to hold for general \(k,l\). This contradiction shows that no such sequence \(\{\lambda_k\}\) exists. Therefore, \(\Omega\) is not a coboundary.
\end{proof}


\subsection{Avatars of the Virasoro-Bott group}
There is an infinite-dimensional Lie group whose Lie algebra is the Virasoro algebra.

\begin{definition}
     The Virasoro-Bott group  $\mathrm{VB}(S^1):=\mathrm{Diff}(S^1)\oplus \mathbb R$
    is the central extension of the diffeomorphism group   $\mathrm{Diff}(S^1)$ of the circle 
    by means of the     Bott group cocycle $\chi(\phi, \psi)$ of the following  form:
\[
\chi(\phi, \psi)=\int_{S^1} \log (\phi\circ \psi)_x \, d\log (\psi_x)\,,
\]
where $\phi, \psi$ are diffeomorphisms of the circle $S^1$.
\end{definition}
The group multiplication in the Virasoro-Bott group $VB(S^1)$ is given by
\[
(\phi, s)\circ (\psi, t):=(\phi\circ \psi, s+t+ \chi(\phi, \psi) )
\]
for any pair $(\phi, s), (\psi, t)\in \mathrm{Diff}(S^1)\oplus \mathbb R$. The cocycle identity
$$
B(\phi\circ\psi, \eta)+B(\phi, \psi)=B(\phi, \psi\circ \eta)+B(\psi, \eta)
$$
for any triple of diffeomorphisms $\phi,\psi, \eta$ ensures that the multiplication for the group's extension is associative. It is easy to see that the infinitesimal version of the cocycle $\chi$
is the Gelfand-Fuchs 2-cocycle defining the Virasoro algebra. In particular, this group 
extension $\mathrm{VB}(S^1)$ is nontrivial since its infinitesimal Lie-algebraic cocycle is nontrivial.

It turns out that the Lie group $\mathrm{Diff}(I)$ of orientation-preserving diffeomorphisms of an interval $I=[a,b]$ also has a nontrivial central extension with the following explicit description.

\begin{definition}\label{def:gc}
     The Virasoro-Bott-type group  $\mathrm{VB}(I):=\mathrm{Diff}(I)\oplus \mathbb R$ for an interval
    $I=[a,b]$ is the central extension of the orientation-preserving diffeomorphism group   $\mathrm{Diff}(I)$ of the interval  by means of the  following group cocycle $\chi(\phi, \psi)$:
\[
\chi(\phi, \psi)=\int_a^b \log (\phi_x\circ \psi) \, d\log (\psi_x)\,,
\]
where $\phi, \psi$ are diffeomorphisms of the circle $S^1$.
\end{definition}
Again, note that we use a specific form of the cocycle on the interval.  It is equivalent to the Bott cocycle on the circle, but differs from the latter by a boundary contribution when restricted to an interval.

We will verify the group cocycle identity in the more general setting of the groupoid cocycle in Section~\ref{sect:brok-diff}. Similarly to the circle case, the group extension \(\mathrm{VB}(I)\) is nontrivial, since its infinitesimal Lie-algebraic cocycle is nontrivial.

\begin{remark}
One can adapt the above definitions to the Lie algebra of vector fields with compact support or rapid decay on the real line \(\mathbb{R}\), where the usual cocycle formula involving third derivatives applies, as well as to a half-line \([a, \infty)\), where it does not. In the latter case, the cocycle is only defined on vector fields vanishing at the endpoint \(a\) of the half-line. In both the line and half-line cases, to define the group cocycle one considers only diffeomorphisms that are equal to, or rapidly approach, the identity at infinity.
\end{remark}

\begin{remark} 
Another variation of the above consideration, relevant for what follows, is the Lie algebra
of vector fields on a finite collection of pairwise adjacent intervals, or equivalently,
the Lie algebra $\mathrm{Vect}_0^n(I)$ of vector fields on the interval $[a,b]$ with any partition $a=p_0<p_1, \dots <p_n=b$ 
vanishing at all intermediate and endpoints 
$p_j, j=0,\dots, n$, but not necessarily differentiable at those points. The above consideration
implies the following.
\end{remark}

\begin{corollary}
The Lie algebra \(\mathrm{Vect}_0^n(I)\) admits an \(n\)-dimensional nontrivial central extension, given by an arbitrary linear combination of the cocycles \(\sum_{j} \lambda_j \Omega_j\) with \(\lambda_j \in \mathbb{R}\), where \(\Omega_j\) is defined as in Definition~\ref{def:interval-coc}, but with integration over the interval \([p_j, p_{j+1}]\).

Similarly, the group \(\mathrm{Diff}^n(I)\) of homeomorphisms of the interval \([a,b]\) fixing the points \(a = p_0 < p_1 < \cdots < p_n = b\) and restricting to diffeomorphisms away from these points admits an \(n\)-dimensional nontrivial central extension. This extension is given by the group cocycle from Definition~\ref{def:gc}, with integration over each interval \([p_j, p_{j+1}]\).
\end{corollary}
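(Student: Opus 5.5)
The plan is to reduce everything to the single-interval case via a product decomposition. A vector field in \(\mathrm{Vect}_0^n(I)\) vanishes at every \(p_j\) and is smooth on each closed piece \([p_j,p_{j+1}]\). Its Lie bracket is computed piecewise, so restriction to the pieces gives a Lie algebra isomorphism
\[
\mathrm{Vect}_0^n(I)\cong \bigoplus_{j=0}^{n-1}\mathrm{Vect}_0([p_j,p_{j+1}]).
\]
Similarly, a homeomorphism fixing all \(p_j\) and smooth on each piece is the same as an \(n\)-tuple of diffeomorphisms of the pieces, so
\[
\mathrm{Diff}^n(I)\cong\prod_j \mathrm{Diff}([p_j,p_{j+1}]).
\]
Under these identifications, \(\Omega_j\) (respectively \(\chi_j\)) is the pullback of the cocycle of Definition~\ref{def:interval-coc} (respectively Definition~\ref{def:gc}) along the projection to the \(j\)-th factor. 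A pullback of a cocycle along a homomorphism is again a cocycle, so each \(\Omega_j\) and \(\chi_j\) satisfies the cocycle identity. Here I take as given the one-interval cocycle identities, which the paper asserts and verifies later. Any linear combination \(\sum_j\lambda_j\Omega_j\) is then also a cocycle, and together these combinations give an \(n\)-dimensional central extension \(\mathrm{Vect}_0^n(I)\oplus\mathbb R^n\). The group case is identical.

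For nontriviality, the claim I will prove is that the classes \([\Omega_0],\dots,[\Omega_{n-1}]\) are linearly independent in \(H^2(\mathrm{Vect}_0^n(I),\mathbb R)\). Suppose \(\sum_j\lambda_j\Omega_j=\delta\mu\) for some linear functional \(\mu\), meaning \(\sum_j\lambda_j\Omega_j(u,v)=\mu([u,v])\). Fix an index \(k\) and restrict to vector fields supported in the \(k\)-th factor, i.e.\ vanishing identically off \([p_k,p_{k+1}]\). This subalgebra is \(\mathrm{Vect}_0([p_k,p_{k+1}])\), and brackets of such fields stay in it. On these fields \(\Omega_j=0\) for \(j\ne k\), so \(\lambda_k\Omega_k=\delta(\mu|_{\text{factor}})\). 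By Theorem~\ref{NCB}, applied to the interval \([p_k,p_{k+1}]\) (its proof uses only an affine change of coordinates to \([0,\pi]\)), this forces \(\lambda_k=0\). Since \(k\) was arbitrary, every \(\lambda_k\) vanishes, which gives both the independence of the classes and the nontriviality of every nonzero combination.

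For the group, I will differentiate \(\chi_j\) at the identity. Take one-parameter families \(\phi=\mathrm{id}+su\), \(\psi=\mathrm{id}+tv\). The mixed second derivative of \(\chi_j\) at \(s=t=0\) is \(\int_{p_j}^{p_{j+1}}u_{xx}v_x\,dx\), and its antisymmetrization is \(\pm\tfrac12\Omega_j\), as in the smooth Virasoro case. This is the step needing some care, because boundary terms from integrating by parts must vanish. They do, since \(u\) and \(v\) vanish at the endpoints, and the antisymmetrized form needs no integration by parts at all. A trivial group extension, given by \(\chi=\delta f\), would make the Lie algebra cocycle a coboundary. So any nontrivial combination \(\sum\lambda_j\chi_j\) has a nontrivial infinitesimal class by the previous paragraph, hence is itself nontrivial.

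The main obstacle is already handled by Theorem~\ref{NCB}. What remains is only the bookkeeping check that the restriction and product identifications respect brackets and composition, which holds because everything is computed piecewise.
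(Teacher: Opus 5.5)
Your proposal is correct and follows essentially the paper's route: the corollary is obtained from the single-interval results via the splitting $\mathrm{Vect}_0^n(I)\cong\bigoplus_j\mathrm{Vect}_0([p_j,p_{j+1}])$. A putative coboundary is restricted to each summand and Theorem~\ref{NCB} is invoked, which is exactly the argument the paper later uses on the isotropy algebras of $\BVect_n(S^1)$. The group statement then follows from nontriviality of the infinitesimal cocycle.

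One cosmetic correction concerns the differentiation step. The mixed derivative of $\chi_j(\mathrm{id}+su,\mathrm{id}+tv)$ is $\int_{p_j}^{p_{j+1}} u_x v_{xx}\,dx$, not $\int u_{xx}v_x\,dx$, so the antisymmetrization is exactly $\Omega_j$, with no factor $\tfrac12$ and no integration by parts. Integrating by parts would in fact produce the boundary term $u_xv_x$, which does not vanish just because $u$ and $v$ vanish at the endpoints.
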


\medskip


\section{Lie algebroids and their central extensions}\label{sect:alg-ext}

\subsection{Lie algebroids}

\begin{definition}
A \textit{Lie algebroid} $\A$ over a manifold $\B$ (called \emph{the base} of the algebroid) is a vector bundle $\A \to \B$ endowed with a Lie bracket $\LieBracket$ on $\Cont^\infty$-smooth sections and a vector bundle morphism $\# \colon \A \to \T \B$, called the \textit{anchor map}, such that for any two $\Cont^\infty$-sections $\mathbf{u}, \mathbf{v} \in \Gamma(\A)$ and any smooth function $f \in \Cont^\infty(\B)$, one has the following version of the Leibniz rule:
$$
[\mathbf{u},f\mathbf{v}] = f[\mathbf{u},\mathbf{v}] + (L_{\#\mathbf{u}}  f) \mathbf{v}\,.
$$
\end{definition}

\begin{remark}
Here and in what follows $\Gamma(E)$ stands for the space of smooth sections of a vector bundle $E$, while $ L_{v} f$ stands for the derivative of the function $f$ along the vector 
field $v$ on $\B$.  
\end{remark}

\begin{example}\label{algebroids} The following are examples of Lie algebroids:
\begin{enumerate}[label=(\alph*)] 
\item
A Lie algebra, considered as a Lie algebroid over a point.
\item The tangent bundle $\T \B$ of the manifold $\B$. The bracket on sections is the standard Lie bracket of vector fields, while the anchor map is the identity.
\end{enumerate}
\end{example}

\subsection{Central extensions and cocycles}

The theory of central extensions for Lie algebroids is parallel to that for Lie algebras. In particular, a $1$-dimensional central extension of an algebroid $\A \to \B$ is defined as follows. As a vector bundle, it is a direct sum of $\A$ and a trivial line bundle over $\B$. 
The anchor map on this extended bundle $\hat A \to B$ is obtained by composing the anchor of $A$ with the natural projection $\hat A \to A$. Sections of $\hat A$ can be identified with pairs $(\mathbf{u}, f)$, where $\mathbf{u} \in \Gamma(A)$ and $f \in C^\infty(B)$ is a section of the trivial line bundle. The bracket on sections is defined by
\[
[(\mathbf{u}, f), (\mathbf{v}, g)]
=
\big([\mathbf{u}, \mathbf{v}],\, \Omega(\mathbf{u}, \mathbf{v}) + L_{\#\mathbf{u}} g - L_{\#\mathbf{v}} f \big).
\]
where $\Omega \in \Gamma(\Lambda^2 A^*)$ is a skew-symmetric 2-form on $A$. This bracket always satisfies the Leibniz rule, while the Jacobi identity holds if and only if
\[
L_{\#\mathbf{u}} \Omega(\mathbf{v}, \mathbf{w}) - \Omega([\mathbf{u}, \mathbf{v}], \mathbf{w}) \,+ \circlearrowleft \,= 0,
\]
for all $\mathbf{u}, \mathbf{v}, \mathbf{w} \in \Gamma(A)$, where $\circlearrowleft$ denotes cyclic permutations. The left-hand side is $C^\infty(B)$-linear in $\mathbf{u}, \mathbf{v}, \mathbf{w}$ and thus defines a 3-form $d\Omega \in \Gamma(\Lambda^3 A^*)$, known as the \emph{Lie algebroid differential} of $\Omega$. A 2-form $\Omega$ is called a \emph{2-cocycle} if $d\Omega = 0$. Hence, a one-dimensional central extension of a Lie algebroid $A$ is completely determined by a 2-cocycle on $A$.

More generally, one can define a sequence of maps
\[
d_k : \Gamma(\Lambda^k A^*) \to \Gamma(\Lambda^{k+1} A^*), 
\qquad \text{with } d_{k+1} \circ d_k = 0,
\]
forming the \emph{Chevalley--Eilenberg complex} of the algebroid. The 2-cocycles determining central extensions lie in $\ker d_2$. Any element of the image of $d_1$ is automatically a 2-cocycle; such cocycles are called \emph{coboundaries}. Explicitly, if $\Theta \in \Gamma(A^*)$ is a 1-form, its differential
\[
d\Theta (\mathbf{u}, \mathbf{v}) = L_{\#\mathbf{u}} \Theta(\mathbf{v}) - L_{\#\mathbf{v}} \Theta(\mathbf{u}) - \Theta([\mathbf{u}, \mathbf{v}])
\]
defines a 2-coboundary. Cocycles that are coboundaries correspond to \emph{trivial central extensions}, i.e., those isomorphic to the direct sum of $A$ and a trivial line bundle endowed with zero bracket and anchor. More generally, central extensions up to natural  equivalence  
are classified by the quotient
\[
H^2(A) = \frac{\ker d_2}{\operatorname{im} d_1},
\]
the \emph{second Lie algebroid cohomology} of $A$.

\begin{example}
In the case of a Lie algebra, the cocycle condition becomes the standard condition for a Lie algebra $2$-cocycle:
$$
\Omega([\mathbf{u}, \mathbf{v}], \mathbf{w}) \,+ \circlearrowleft \,= 0.
$$
\end{example}

\begin{example}
When $\A = T\B$ is the tangent bundle, the Lie algebroid differential is just the de Rham differential, so a central extension is determined by a closed $2$-form on $\B$. 
\end{example}
Analogously, one defines $n$-dimensional central extensions. In that case, the cocycle $\Omega$ takes values in $\R^n$, which is equivalent to specifying $n$ cocycles $\Omega_1, \dots, \Omega_n$.

\subsection{Algebroid and algebra cocycles}

A cocycle on a Lie algebroid also induces a cocycle on each of its \textit{isotropy algebras}, as follows.

Let $x \in B$, and let $u, v \in \Ker \#_x$ be elements of the kernel of the anchor map restricted to the fiber of $\A$ over $x$. Let $\mathbf{u}, \mathbf{v}$ be arbitrary smooth sections of $\A$ such that $\mathbf{u}(x) = u$ and $\mathbf{v}(x) = v$. Then one easily checks the following:

\begin{proposition}\label{prop:isoalgebra}
The value $[\mathbf{u}, \mathbf{v}](x)$ depends only on $u$ and $v$, but not on the choice of the extensions $\mathbf{u}, \mathbf{v}$. Therefore, the formula
\[
[u, v] := [\mathbf{u}, \mathbf{v}](x)
\]
defines a well-defined bracket on $\Ker \#_x$. This bracket endows $\Ker \#_x$ with the structure of a Lie algebra. 
\end{proposition}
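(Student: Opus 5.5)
The plan is to derive everything from the Leibniz rule together with the fact that $u$ and $v$ lie in $\Ker \#_x$. First, well-definedness. Since the bracket is skew-symmetric and bilinear, it suffices to show that $[\mathbf{u},\mathbf{v}](x)=0$ whenever $\mathbf{v}(x)=0$ and $\#\mathbf{u}(x)=0$. Indeed, if $\mathbf{v},\mathbf{v}'$ both extend $v$, their difference vanishes at $x$; and by skew-symmetry the same argument handles a change of the extension of $u$, using $\#\mathbf{v}(x)=0$.

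To prove this vanishing, work in a local frame $e_1,\dots,e_r$ of $A$ near $x$ and write $\mathbf{v}=\sum_i f_i e_i$ with $f_i(x)=0$. The bracket is local: if a section vanishes on an open set, so does its bracket with anything. This follows from the Leibniz rule by multiplying with a bump function $\rho$ equal to $1$ near a point where the section vanishes: $\rho\mathbf{w}=0$ gives $0=\rho[\mathbf{u},\mathbf{w}]+(L_{\#\mathbf{u}}\rho)\mathbf{w}$ near that point. Locality lets us compute $[\mathbf{u},\mathbf{v}](x)$ using the local expression, extended globally by a cutoff if needed. The Leibniz rule then gives
\[
[\mathbf{u},\mathbf{v}](x)=\sum_i f_i(x)[\mathbf{u},e_i](x)+\sum_i (L_{\#\mathbf{u}}f_i)(x)\,e_i(x)=0,
\]
because $f_i(x)=0$ and $(L_{\#\mathbf{u}}f_i)(x)=df_i(\#\mathbf{u}(x))=0$ since $\#\mathbf{u}(x)=0$.

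Next, the Lie algebra structure. Bilinearity and skew-symmetry on $\Ker\#_x$ are inherited directly from the section bracket. For Jacobi we need the triple brackets $[[\mathbf{u},\mathbf{v}],\mathbf{w}](x)$ to be computed by the induced bracket. For this, $[\mathbf{u},\mathbf{v}]$ must be an admissible extension of $[u,v]$, i.e.\ $[u,v]$ must itself lie in $\Ker\#_x$. We use the standard fact that the anchor is a bracket homomorphism, $\#[\mathbf{u},\mathbf{v}]=[\#\mathbf{u},\#\mathbf{v}]$. Then $\#[\mathbf{u},\mathbf{v}](x)=[\#\mathbf{u},\#\mathbf{v}](x)$, which vanishes because both vector fields vanish at $x$: in coordinates, $[X,Y]^j=X^k\partial_kY^j-Y^k\partial_kX^j$. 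So $[\mathbf{u},\mathbf{v}]$ is a valid extension of $[u,v]$, and by well-definedness $[[u,v],w]=[[\mathbf{u},\mathbf{v}],\mathbf{w}](x)$. The Jacobi identity on $\Ker\#_x$ then follows by evaluating the Jacobi identity for sections at $x$.

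The main obstacle is the anchor-homomorphism property, since the definition given in the paper does not list it as an axiom. I would derive it from Jacobi and Leibniz in the usual way. Expand $[[\mathbf{u},\mathbf{v}],f\mathbf{w}]+\circlearrowleft$ using Leibniz. The terms not involving derivatives of $f$ cancel by Jacobi, and the remaining terms reduce to
\[
\bigl(L_{\#[\mathbf{u},\mathbf{v}]}f-[L_{\#\mathbf{u}},L_{\#\mathbf{v}}]f\bigr)\mathbf{w}=0
\]
for all $f$ and $\mathbf{w}$. Choosing $\mathbf{w}$ nonvanishing at the point in question gives $\#[\mathbf{u},\mathbf{v}]=[\#\mathbf{u},\#\mathbf{v}]$. (If $A$ has rank zero there is nothing to prove.) The remaining bookkeeping, including locality and the cutoff arguments, is routine.
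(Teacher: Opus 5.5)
Your argument is correct. The paper gives no proof of this proposition; it only says ``one easily checks''. So there is no competing route to compare, and what you wrote is the standard argument.

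You rightly noticed that closure of $\Ker\#_x$ under the bracket requires the anchor to be a bracket homomorphism. This is not among the paper's axioms, and you derive it correctly from Jacobi and Leibniz. The leftover term is exactly $\bigl(L_{\#[\mathbf{u},\mathbf{v}]}f-[L_{\#\mathbf{u}},L_{\#\mathbf{v}}]f\bigr)\mathbf{w}$.

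One caveat is worth recording. Your well-definedness step uses a finite local frame $e_1,\dots,e_r$. The paper, however, applies this proposition to $\BVect_n(S^1)$, whose fibers are infinite-dimensional. In that setting you can replace the frame by Hadamard's lemma in a local trivialization over a coordinate chart of the finite-dimensional base. This writes a section vanishing at $x$ as $\sum_{k=1}^{\dim B}(y^k-x^k)\,\mathbf{s}_k$ with smooth local sections $\mathbf{s}_k$. The number of terms is then governed by $\dim B$ rather than the rank, and your Leibniz argument goes through verbatim.

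Alternatively, for $\BVect_n(S^1)$ you can read everything off the explicit formula $\dbo\mathbf{u},\mathbf{v}\dbc=[\mathbf{u},\mathbf{v}]+L_{\#\mathbf{u}}\mathbf{v}-L_{\#\mathbf{v}}\mathbf{u}$. The correction terms vanish at $p$ when $\#\mathbf{u}(p)=\#\mathbf{v}(p)=0$. What remains is the fiberwise Lie bracket, which depends only on $\mathbf{u}(\cdot,p)$ and $\mathbf{v}(\cdot,p)$. This directly gives the isotropy algebra $\bigoplus_i\Vect_0([p_i,p_{i+1}])$ used later in the paper.
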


\begin{definition}
The Lie algebra $\Ker \#_x$ is called the \textit{isotropy algebra} at the point $x$.
\end{definition}

\begin{proposition}
Suppose $\Omega \in \Gamma(\Lambda^2 A^*)$ is a 2-cocycle on the Lie algebroid $A$. Then, for any $x \in B$, the restriction of $\Omega$ to the isotropy algebra $\ker \#_x$ is a Lie algebra 2-cocycle. Moreover, if $\Omega$ is a coboundary, then its restriction to $\ker \#_x$ is also a coboundary.
\end{proposition}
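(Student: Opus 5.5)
The plan is to evaluate the algebroid cocycle condition at the point $x$ on sections whose values lie in the isotropy algebra, using Proposition~\ref{prop:isoalgebra} to identify brackets of sections at $x$ with the isotropy bracket.

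Fix $u,v,w \in \Ker \#_x$ and choose arbitrary smooth sections $\mathbf{u},\mathbf{v},\mathbf{w}$ of $A$ extending them. The cocycle identity $d\Omega=0$ holds pointwise, so we evaluate it at $x$:
\[
L_{\#\mathbf{u}} \Omega(\mathbf{v}, \mathbf{w})(x) - \Omega([\mathbf{u}, \mathbf{v}], \mathbf{w})(x) \,+ \circlearrowleft \,= 0.
\]
Each derivative term vanishes at $x$, because $L_{\#\mathbf{u}} f (x) = df_x(\#_x u) = 0$ for every smooth function $f$, and similarly for the cyclic permutations. For the remaining terms, $\Omega$ is a tensor, i.e.\ a section of $\Lambda^2A^*$. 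Hence $\Omega([\mathbf{u},\mathbf{v}],\mathbf{w})(x) = \Omega_x([\mathbf{u},\mathbf{v}](x), w) = \Omega_x([u,v],w)$, where the last equality is Proposition~\ref{prop:isoalgebra}. The identity at $x$ therefore reduces to $\Omega_x([u,v],w) + \circlearrowleft = 0$, which is exactly the Lie algebra 2-cocycle condition for the restriction of $\Omega_x$ to $\Ker\#_x$. We should also note that $[u,v]\in\Ker\#_x$: the anchor is a bracket morphism, so $\#[\mathbf u,\mathbf v]=[\#\mathbf u,\#\mathbf v]$, and a bracket of two vector fields vanishing at $x$ vanishes at $x$. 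This is already part of the Lie algebra statement in Proposition~\ref{prop:isoalgebra}, so we may simply cite it.

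For coboundaries, suppose $\Omega = d\Theta$ with $\Theta\in\Gamma(A^*)$. Evaluating the formula for $d\Theta$ at $x$ on the same extensions, both terms $L_{\#\mathbf{u}}\Theta(\mathbf{v})(x)$ and $L_{\#\mathbf{v}}\Theta(\mathbf{u})(x)$ vanish for the same reason as before. We are left with $\Omega_x(u,v) = -\Theta_x([u,v])$. This says that the restriction of $\Omega_x$ is the Lie algebra coboundary of the linear functional $-\Theta_x$ restricted to $\Ker\#_x$, up to the sign convention used for Lie algebra coboundaries.

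The only point needing care is the tensoriality step, which is where the choice of extensions must drop out. The derivative terms are handled by $\#_x u=0$, and the bracket terms by Proposition~\ref{prop:isoalgebra}. I expect no real obstacle beyond this bookkeeping.
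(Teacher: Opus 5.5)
Your argument is correct, and it reaches the result by a different route than the paper. The paper does no computation. It notes that the inclusion of the isotropy algebra $\ker\#_x$ into $A$ is a Lie algebroid morphism, and cites a general result (Theorem 5.1 of \cite{Eckhard2024}) that such morphisms induce cochain maps between Chevalley--Eilenberg complexes. Preservation of cocycles and coboundaries then follows formally.

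You instead check the cochain-map property by hand in exactly the two degrees needed. You evaluate $d\Omega$ and $d\Theta$ at $x$ on arbitrary extensions and kill the Lie derivative terms using $\#_x u=0$. You then use tensoriality of $\Omega$ and $\Theta$, together with Proposition~\ref{prop:isoalgebra}, to replace $[\mathbf{u},\mathbf{v}](x)$ by the isotropy bracket.

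What each buys:
\begin{itemize}
\item Your route is self-contained. It needs only the definitions and Proposition~\ref{prop:isoalgebra}. It also avoids treating $\ker\#_x\hookrightarrow A$, an algebroid over a point mapping into one over $B$, as an algebroid morphism between different bases, a notion that is somewhat delicate.
\item The paper's route is shorter. It also gives the stronger statement that restriction induces maps $H^k(A)\to H^k(\ker\#_x)$ in every degree.
\end{itemize}

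Your sign caveat is harmless. Since $-\Theta_x|_{\ker\#_x}$ is itself a linear functional $\lambda$, the restriction satisfies $\Omega_x(u,v)=\lambda([u,v])$. This is exactly the notion of coboundary used in the proof of Theorem~\ref{NCB}, whatever sign convention one prefers.
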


\begin{proof}
Restriction to the isotropy algebra at any point defines a cochain map from the Chevalley--Eilenberg complex of the Lie algebroid $A$ to the Chevalley--Eilenberg complex of the Lie algebra $\ker \#_x$ (this follows, e.g., from \cite[Theorem 5.1]{Eckhard2024} since the bundle map $\ker \# \to A$ is by construction an algebroid morphism). Since cochain maps preserve the differential, the restriction of a 2-cocycle is again a 2-cocycle. Similarly, the restriction of a coboundary remains a coboundary.
\end{proof}


\section{The Lie algebroid of broken vector fields}\label{sec:algebroid}

In this section we consider vector fields on the circle $S^1$ which are everywhere continuous and are $C^\infty$-smooth away from a finite number of ``break'' points $p_1, \dots, p_n \in S^1$. We will always assume that the points $p_1, \dots, p_n$ are cyclically ordered. Formally, we define vector fields with breaks as follows.

\begin{definition}
A continuous vector field on the circle $S^1$ is said to be a vector field with breaks at $p_1, \dots, p_n \in S^1$ if, on each connected component of $S^1 \setminus \{p_1, \dots, p_n\}$, it is smooth up to the boundary; that is, it is smooth on $S^1 \setminus \{p_1, \dots, p_n\}$ and admits left and right derivatives of all orders at each point $p_1, \dots, p_n$.
\end{definition}




We denote by $\BVect_n(S^1)$ the set of all vector fields on the circle with $n$ breaks. In contrast to the space of all smooth vector fields, $\BVect_n(S^1)$ is no longer a vector space but rather a vector bundle over the manifold $\B \subset (S^1)^n$ consisting of $n$-tuples of cyclically ordered points $p_1, \dots, p_n \in S^1$. The projection $\BVect_n(S^1) \to \B$ associates to a vector field with $n$ breaks the positions of its break points.

In this section we will show that this bundle can be equipped with the structure of a Lie algebroid. The corresponding bracket generalizes the Lie bracket of smooth vector fields on the circle. 

To describe this structure, note that a section of $\BVect_n(S^1)$ is a family of vector fields
$
u(x,p)\partial_x
$
on the circle depending on a parameter $p = (p_1, \dots, p_n) \in \B$ such that, for every $p \in \B$, the vector field $u(x,p)\partial_x$ has breaks at $p_1, \dots, p_n$.

\begin{definition}[{(The Lie algebroid of broken vector fields)}]\label{def:alg-broken}
The bracket of two sections
\[
u(x,p)\partial_x, \qquad v(x,p)\partial_x
\]
of $\BVect_n(S^1)$ is defined by
\begin{gather}
\dbo u(x,p)\partial_x,\, v(x,p)\partial_x \dbc 
= \big(u(x,p)v_x(x,p) - v(x,p)u_x(x,p)\big)\partial_x \\
\quad + \sum_{i=1}^n \big(u(p_i,p)\,v_{p_i}(x,p) - v(p_i,p)\,u_{p_i}(x,p)\big)\partial_x,
\end{gather}
where subscripts denote partial derivatives.

The anchor map $\# \colon \BVect_n(S^1) \to T\B$ is defined by sending a broken vector field to its values at the break points.
\end{definition}
We use the $\dbo \,,\dbc$ notation for the bracket of sections to distinguish it from the Lie bracket of vector fields. Also note that for sections
\[
\mathbf{u} = u(x,p)\partial_x, \quad 
\mathbf{v} = v(x,p)\partial_x,
\]
the bracket in the algebroid can be rewritten as
\begin{gather}
\dbo \mathbf{u}, \mathbf{v} \dbc = [\mathbf{u}, \mathbf{v}] + L_{\#\mathbf{u}} \mathbf{v} - L_{\#\mathbf{v}} \mathbf{u},
\end{gather}
since
\[
L_{\#\mathbf{u}} = \sum_{i=1}^n u(p_i,p)\,\frac{\partial}{\partial p_i}.
\]

\begin{remark}
Note that the first term in the above formula is simply the usual Lie bracket of vector fields. By itself, however, it does not define a bracket on $\BVect_n(S^1)$, since the continuity condition for broken vector fields is not preserved under the Lie bracket. The remaining terms can be viewed as a \emph{correction} that restores the required continuity.

A similar situation arises in the study of vortex sheets in incompressible fluids \cite{IzoKh2017}. There, the algebroid consists of vector fields that are discontinuous across a hypersurface but whose normal component is continuous. Such vector fields are not closed under the Lie bracket and therefore do not form a Lie algebra. In that context, correction terms of the form 
$L_{\#\mathbf{u}} \mathbf{v} - L_{\#\mathbf{v}} \mathbf{u}$
are introduced to ensure closure under the algebroid bracket.

The setting of the present paper can be viewed as a one-dimensional analogue of vortex sheets in a compressible fluid, which explains the appearance of similar bracket structures.
\end{remark}

\begin{proposition}
The above definitions of the bracket and anchor turn $\BVect_n(S^1)$ into a Lie algebroid.
\end{proposition}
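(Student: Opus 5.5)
We need to verify four things: the bracket of two sections is again a section, i.e. a family of broken vector fields with breaks at $p_1,\dots,p_n$; the bracket is skew-symmetric and bilinear; the Leibniz rule holds with respect to the anchor; and the Jacobi identity holds. Skew-symmetry and bilinearity are immediate from the formula $\dbo \mathbf{u},\mathbf{v}\dbc = [\mathbf{u},\mathbf{v}] + L_{\#\mathbf{u}}\mathbf{v} - L_{\#\mathbf{v}}\mathbf{u}$.

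\textbf{Closure.} The main point is continuity at the break points. Away from the $p_j$, and on each closed arc up to the boundary, all terms are smooth in $x$, so only continuity across $p_j$ needs checking. Write $u^\pm(p_j)$ for one-sided limits. The identity $u^+(p_j,p) = u^-(p_j,p)$ holds for all $p$, so differentiating it in $p_i$ gives
\[
u^+_{p_i}(p_j) + \delta_{ij}\,u^+_x(p_j) = u^-_{p_i}(p_j) + \delta_{ij}\,u^-_x(p_j),
\]
because the break point $p_j$ itself moves with $p_j$. Hence the jump of $u_{p_j}$ at $p_j$ equals minus the jump of $u_x$ there, and $u_{p_i}$ is continuous at $p_j$ for $i\ne j$. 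The jump at $p_j$ of $uv_x - vu_x + u(p_j)v_{p_j} - v(p_j)u_{p_j}$ is therefore
\[
u(p_j)[v_x] - v(p_j)[u_x] - u(p_j)[v_x] + v(p_j)[u_x] = 0 .
\]
(One should double check the smooth dependence on $p$ in the definition of sections, but this is routine.) So $\dbo\mathbf{u},\mathbf{v}\dbc$ is continuous with breaks exactly at $p$, and it is a section.

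\textbf{Leibniz rule.} For $f\in C^\infty(B)$ we have $[\mathbf{u}, f\mathbf{v}] = f[\mathbf{u},\mathbf{v}]$, since $f$ does not depend on $x$. Also
\[
L_{\#\mathbf{u}}(f\mathbf{v}) = fL_{\#\mathbf{u}}\mathbf{v} + (L_{\#\mathbf{u}}f)\mathbf{v}
\qquad\text{and}\qquad
\#(f\mathbf{v}) = f\#\mathbf{v}.
\]
Together these give $\dbo \mathbf{u}, f\mathbf{v}\dbc = f\dbo\mathbf{u},\mathbf{v}\dbc + (L_{\#\mathbf{u}}f)\mathbf{v}$.

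\textbf{Jacobi identity.} This is the step I expect to be the main obstacle. The plan is to first prove the anchor is a morphism, $\#\dbo\mathbf{u},\mathbf{v}\dbc = [\#\mathbf{u},\#\mathbf{v}]$. The $i$-th component of the left side is $uv_x - vu_x + \sum_j(u(p_j)v_{p_j} - v(p_j)u_{p_j})$ evaluated at $x=p_i$. Since $\#\mathbf{v}_i(p)=v(p_i,p)$, we have $\partial_{p_j}(v(p_i,p)) = v_{p_j}(p_i) + \delta_{ij}v_x(p_i)$, taking one-sided values consistently. So the left side equals $\sum_j u(p_j)\partial_{p_j}(\#\mathbf{v}_i) - (u\leftrightarrow v)$, which is the vector field bracket.

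For Jacobi itself I would interpret the bracket conceptually, to avoid expanding three levels of formulas. Consider the operators $D_{\mathbf{u}} = \#\mathbf{u} + \mathbf{u}$ acting on functions $g(x,p)$ on $S^1\times B$, namely $D_{\mathbf{u}} = \sum_i u(p_i,p)\partial_{p_i} + u(x,p)\partial_x$. These are vector fields on $S^1\times B$, or on each piece $\{p_{j}\le x\le p_{j+1}\}$. Their commutator is
\[
[D_{\mathbf{u}},D_{\mathbf{v}}] = [\#\mathbf{u},\#\mathbf{v}] + \bigl(L_{\#\mathbf{u}}v - L_{\#\mathbf{v}}u + uv_x - vu_x\bigr)\partial_x ,
\]
because the coefficient $u(x,p)$ is independent of the $\partial_{p}$-directions only through explicit differentiation. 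By the anchor computation the first term equals $\#\dbo\mathbf{u},\mathbf{v}\dbc$, so $[D_{\mathbf{u}},D_{\mathbf{v}}] = D_{\dbo\mathbf{u},\mathbf{v}\dbc}$. The map $\mathbf{u}\mapsto D_{\mathbf{u}}$ is injective, since its $\partial_x$-component recovers $u$. Hence Jacobi follows from Jacobi for vector fields on $S^1\times B$, working on each closed piece where everything is smooth. This realizes $\BVect_n(S^1)$ as a transformation-type algebroid, and I would present the proof this way.
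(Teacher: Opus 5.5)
Your argument is correct and is essentially the paper's proof: the map $\mathbf{u}\mapsto D_{\mathbf{u}}=u\,\partial_x+\sum_i u(p_i,p)\,\partial_{p_i}$ is exactly the paper's identification of sections with vector fields on $S^1\times B$ tangent to the hypersurfaces $x=p_i$. As in the paper, you check the Leibniz rule directly and inherit the Jacobi identity from the Lie bracket of vector fields. The paper only asserts that this space of vector fields is closed under the bracket, whereas you verify it via the jump relation $[u_{p_j}]=-[u_x]$ at $p_j$ and the identity $\#\dbo\mathbf{u},\mathbf{v}\dbc=[\#\mathbf{u},\#\mathbf{v}]$, which is a welcome addition; your closing label ``transformation-type algebroid'' is inaccurate (the fibers are not one fixed Lie algebra acting on $B$), but nothing in the proof depends on it.
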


\begin{proof}
We first verify the Leibniz rule. For $u(x,p)\partial_x, v(x,p)\partial_x \in \Gamma(\BVect_n(S^1))$ and a smooth function $f(p)$, we have
\begin{gather}
\dbo u(x,p)\partial_x, f(p)\,v(x,p)\partial_x \dbc 
= f(p)\,\dbo u(x,p)\partial_x, v(x,p)\partial_x \dbc \\
+ \left( \sum_{i=1}^n u(p_i,p)\, \frac{\partial f}{\partial p_i}(p) \right) v(x,p)\,\partial_x,
\end{gather}
so the Leibniz rule holds.

Next, we check that the space of sections of $\BVect_n(S^1)$ is closed under the bracket $\dbo \,,\dbc$, and that the Jacobi identity is satisfied. To this end, to each section $u(x,p)\partial_x$ we associate the vector field
\begin{equation}
u(x,p)\,\partial_x + \sum_{i=1}^n u(p_i,p)\,\partial_{p_i}
\end{equation}
on $S^1 \times \B$. This gives a vector space isomorphism between $\Gamma(\BVect_n(S^1))$ and the space of vector fields on $S^1 \times \B$ of the form
\begin{equation}
u(x,p)\,\partial_x + \sum_{i=1}^n u(p_i,p)\,\partial_{p_i},
\end{equation}
which are everywhere continuous, smooth up to the boundary in each connected component of the complement of the hyperplanes $x = p_1, \dots, x = p_n$, and tangent to these hyperplanes. Observe that this space is closed under the usual Lie bracket of vector fields. Moreover, we have
\begin{gather}
\Big[ u(x,p)\,\partial_x + \sum_{i=1}^n u(p_i,p)\,\partial_{p_i},\,
v(x,p)\,\partial_x + \sum_{i=1}^n v(p_i,p)\,\partial_{p_i} \Big] \\
= \dbo u(x,p)\partial_x, v(x,p)\partial_x \dbc + \sum_{i=1}^n \cdots \,\partial_{p_i},
\end{gather}
so the above assignment identifies the bracket $\dbo \,,\dbc$ with the standard Lie bracket of vector fields. The result follows.
\end{proof}
\begin{remark}
In the case of a single break point $p$, this identification allows one to view the algebroid bracket as the Lie bracket on a Lie algebra of vector fields on the 2-torus. These vector fields are of the form
\[
u(x,p)\,\partial_x + u(p,p)\,\partial_p,
\]
which are tangent to the ``diagonal'' $x=p$ and whose $p$-component is independent of $x$.
\end{remark}


\section{The algebroid Gelfand-Fuchs cocycle and Virasoro algebroid}

Recall that the Gelfand--Fuchs 2-cocycle on the Lie algebra of vector fields on the circle $S^1$ can be written in several equivalent forms; see Definition~\ref{def:GF-cocycle}. 
Below we show that if one uses the form
\[
\int u_x v_{xx}\,dx
\]
followed by an additional skew-symmetrization, this yields a cocycle on the algebroid $\BVect_n(S^1)$ of broken vector fields.
\begin{theorem}\label{thm:GFA}
Let $u(x)\partial_x$ and $v(x)\partial_x$ be vector fields with breaks at points $p_1, \dots, p_n \in S^1$ (so that they belong to the same fiber of the algebroid $\BVect_n(S^1)$). Then $(\Omega_1, \dots, \Omega_n)$, where
\[
\Omega_i\!\left(u(x)\partial_x,\, v(x)\partial_x\right)
:=
\int_{p_i}^{p_{i+1}} \bigl(u_x v_{xx} - u_{xx} v_x\bigr)\,dx,
\]
defines an $n$-dimensional cocycle on $\BVect_n(S^1)$.
\end{theorem}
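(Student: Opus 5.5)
The plan is to check the algebroid cocycle condition from Section~\ref{sect:alg-ext} directly for each $\Omega_i$ separately. Take three sections $\mathbf{u}=u(x,p)\partial_x$, $\mathbf{v}=v(x,p)\partial_x$ and $\mathbf{w}=w(x,p)\partial_x$ of $\BVect_n(S^1)$. We must show
\[
L_{\#\mathbf{u}}\Omega_i(\mathbf{v},\mathbf{w})-\Omega_i(\dbo\mathbf{u},\mathbf{v}\dbc,\mathbf{w})\,+\circlearrowleft\,=0 .
\]
Since $\Omega_i$ only involves the arc $[p_i,p_{i+1}]$, on which all three fields are smooth up to the boundary, everything reduces to a computation on a single interval with moving endpoints.

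\textbf{Step 1: differentiate the moving integral.} Write $L_{\#\mathbf{u}}=\sum_j u(p_j,p)\partial_{p_j}$. The Leibniz rule for an integral with moving limits splits $L_{\#\mathbf{u}}\Omega_i(\mathbf{v},\mathbf{w})$ into two parts.
\begin{itemize}
\item The \emph{boundary part} is
\[
u(p_{i+1})\,(v_xw_{xx}-v_{xx}w_x)(p_{i+1}^-)-u(p_i)\,(v_xw_{xx}-v_{xx}w_x)(p_i^+).
\]
It uses only the single value $u(p_i)$, which is well defined because $u$ is continuous.
\item The \emph{interior part} is $\Omega_i(L_{\#\mathbf{u}}\mathbf{v},\mathbf{w})+\Omega_i(\mathbf{v},L_{\#\mathbf{u}}\mathbf{w})$, obtained by differentiating the integrand in $p$.
\end{itemize}

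\textbf{Step 2: cancel the correction terms.} Using $\dbo\mathbf{u},\mathbf{v}\dbc=[\mathbf{u},\mathbf{v}]+L_{\#\mathbf{u}}\mathbf{v}-L_{\#\mathbf{v}}\mathbf{u}$, the correction terms give $-\Omega_i(L_{\#\mathbf{u}}\mathbf{v},\mathbf{w})+\Omega_i(L_{\#\mathbf{v}}\mathbf{u},\mathbf{w})$. After the cyclic sum, and using skew-symmetry of $\Omega_i$, these cancel exactly against the interior parts from Step 1. This bookkeeping is the same as in the proof that the bracket $\dbo\,,\dbc$ is a Lie bracket.

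\textbf{Step 3: reduce to a fixed interval.} What remains is a statement at fixed $p$ about arbitrary functions $u,v,w$ smooth on $[a,b]=[p_i,p_{i+1}]$, with no vanishing at the endpoints assumed. Using the pointwise Lie bracket $[u,v]=uv_x-u_xv$, we need
\[
\sum_{\circlearrowleft}\int_a^b\Bigl((uv_x-u_xv)_x w_{xx}-(uv_x-u_xv)_{xx}w_x\Bigr)dx
=\Bigl[\sum_{\circlearrowleft}u\,(v_xw_{xx}-v_{xx}w_x)\Bigr]_a^b .
\]

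\textbf{Step 4: show the integrand is a total derivative (main obstacle).} I expect this identity to be the only real computation. I would expand $(uv_x-u_xv)_x=uv_{xx}-u_{xx}v$ and $(uv_x-u_xv)_{xx}=u_xv_{xx}+uv_{xxx}-u_{xxx}v-u_{xx}v_x$. Then:
\begin{itemize}
\item the terms $uv_{xx}w_{xx}$ and $u_{xx}vw_{xx}$ cancel in the cyclic sum, since one is a cyclic relabeling of the other;
\item the remaining cyclic sum should match the cyclic sum of $\partial_x\bigl(u(v_xw_{xx}-v_{xx}w_x)\bigr)=u_x(v_xw_{xx}-v_{xx}w_x)+u(v_xw_{xxx}-v_{xxx}w_x)$, term by term after cyclic relabeling.
\end{itemize}
The key is to use the skew-symmetrized form $u_xv_{xx}-u_{xx}v_x$ of the cocycle: this is what makes the leftover boundary expression exactly $\sum_{\circlearrowleft}u(v_xw_{xx}-v_{xx}w_x)$, with no further boundary terms. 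The signs have to be tracked carefully so that these boundary terms cancel the boundary part of Step 1 rather than double it. I would fix the sign convention by checking the special case where $u,v,w$ vanish at $a,b$, which must recover the interval cocycle of Definition~\ref{def:interval-coc}.

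Finally, since each $\Omega_i$ is $C^\infty(\B)$-bilinear and skew, each $\Omega_i$ is a 2-cocycle. Hence the tuple $(\Omega_1,\dots,\Omega_n)$ is an $n$-dimensional cocycle on $\BVect_n(S^1)$.
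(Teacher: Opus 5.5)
Your proposal is correct and follows the paper's proof. You absorb the correction terms $L_{\#\mathbf{u}}\mathbf{v}-L_{\#\mathbf{v}}\mathbf{u}$ into the interior part of $L_{\#\mathbf{u}}\Omega_i(\mathbf{v},\mathbf{w})$, leaving only the endpoint contribution (the paper's $(L_{\#\mathbf{u}}\Omega)(\mathbf{v},\mathbf{w})$), and then show that $\sum_{\circlearrowleft}\Omega_i([\mathbf{u},\mathbf{v}],\mathbf{w})$ equals that same boundary expression; the paper does this by cancelling the terms without third derivatives and integrating by parts, while you write the cyclically summed integrand as $\partial_x$ of $\sum_{\circlearrowleft}u(v_xw_{xx}-v_{xx}w_x)$, which is equivalent.

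The identity in your Step 3 holds exactly as written, so no sign needs fixing. Note also that the sanity check you suggest (fields vanishing at the endpoints) could not detect a sign error anyway, because the boundary term vanishes identically in that case.
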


\begin{proof}
It suffices to show that, for each $i$, the bilinear form
\[
\Omega\!\left(u(x)\partial_x,\, v(x)\partial_x\right)
:=
\int_{p_i}^{p_{i+1}} \bigl(u_x v_{xx} - u_{xx} v_x\bigr)\,dx
\]
defines a cocycle. Let
\[
\mathbf{u} = u(x,p)\partial_x, \quad
\mathbf{v} = v(x,p)\partial_x, \quad
\mathbf{w} = w(x,p)\partial_x
\]
be three sections of $\BVect_n(S^1)$. Then
\begin{align}
&L_{\#\mathbf{u}} (\Omega(\mathbf{v}, \mathbf{w})) - \Omega([[\mathbf{u}, \mathbf{v}]], \mathbf{w}) \,+ \circlearrowleft \notag \\
&= L_{\#\mathbf{u}} (\Omega(\mathbf{v}, \mathbf{w})) - \Omega([\mathbf{u}, \mathbf{v}], \mathbf{w}) - \Omega(L_{\#\mathbf{u}}\mathbf{v}, \mathbf{w}) + \Omega(L_{\#\mathbf{v}}\mathbf{u}, \mathbf{w}) \,+ \circlearrowleft \notag \\
&= L_{\#\mathbf{u}} (\Omega(\mathbf{v}, \mathbf{w})) - \Omega([\mathbf{u}, \mathbf{v}], \mathbf{w}) - \Omega(L_{\#\mathbf{u}}\mathbf{v}, \mathbf{w}) - \Omega(\mathbf{v}, L_{\#\mathbf{u}}\mathbf{w}) \,+ \circlearrowleft \notag \\
&= (L_{\#\mathbf{u}} \Omega)(\mathbf{v}, \mathbf{w}) - \Omega([\mathbf{u}, \mathbf{v}], \mathbf{w}) \,+ \circlearrowleft.
\end{align}

Next, we compute
\begin{gather}
\Omega([\mathbf{u}, \mathbf{v}], \mathbf{w}) \,+ \circlearrowleft
= \int_{p_i}^{p_{i+1}} 
\Bigl(
(u v_x - u_x v)_x \, w_{xx} 
- (u v_x - u_x v)_{xx} \, w_x
\Bigr) dx \,+ \circlearrowleft \notag\\
= \int_{p_i}^{p_{i+1}} \Bigl[
(u v_{xx} - u_{xx} v) \, w_{xx} 
- (u_x v_{xx} - u_{xx} v_x) \, w_x - (u v_{xxx} - u_{xxx} v) \, w_x
\Bigr] dx \,+ \circlearrowleft.
\end{gather}

All terms not involving third derivatives cancel after summing over cyclic permutations. Hence,
\[
\Omega([\mathbf{u}, \mathbf{v}], \mathbf{w}) \,+ \circlearrowleft \,
= \int_{p_i}^{p_{i+1}} (u_{xxx} v - u v_{xxx})\, w_x \, dx \,+ \circlearrowleft.
\]

We now integrate by parts, moving third derivatives off $u$ and $v$. The resulting integral terms cancel under cyclic summation, leaving only boundary contributions:
\begin{align}
\Omega([\mathbf{u}, \mathbf{v}], \mathbf{w}) \,+ \circlearrowleft
&= \bigl( (u_{xx} v - u v_{xx}) w_x \bigr)\Big|_{p_i}^{p_{i+1}} \,+ \circlearrowleft \notag \\
&= \bigl( u v_x w_{xx} - u v_{xx} w_x \bigr)\Big|_{p_i}^{p_{i+1}} \,+ \circlearrowleft \notag \\
&= (L_{\#\mathbf{u}} \Omega)(\mathbf{v}, \mathbf{w}) \,+ \circlearrowleft.
\end{align}

This proves the cocycle condition.
\end{proof}

\begin{proposition}
The cocycle $\sum c_i \Omega_i$ is not a coboundary for any nonzero vector 
\[
c = (c_1, \dots, c_n) \in \mathbb{R}^n.
\]
Equivalently, $\sum c_i \Omega_i$  defines a non-trivial one-dimensional extension for any $c\neq 0$. 
\end{proposition}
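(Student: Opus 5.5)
The plan is to reduce to isotropy algebras and then invoke Theorem~\ref{NCB}. By the proposition in Section~\ref{sect:alg-ext}, if $\sum c_i\Omega_i$ were a coboundary on the algebroid $\BVect_n(S^1)$, then its restriction to the isotropy algebra $\ker\#_p$ at any point $p=(p_1,\dots,p_n)\in\B$ would be a Lie algebra coboundary. So it suffices to find a single $p$ at which the restricted cocycle is nontrivial.

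\textbf{Step 1: identify the isotropy algebra.} The anchor sends a broken field to its values at the break points, so $\ker\#_p$ consists of continuous vector fields, smooth up to the boundary on each arc, that vanish at $p_1,\dots,p_n$. Cutting the circle at the $p_j$ identifies this space with $\mathrm{Vect}_0^n(I)$, vector fields on adjacent intervals vanishing at all partition points. We next compute the bracket from Proposition~\ref{prop:isoalgebra}. In the formula for $\dbo\mathbf u,\mathbf v\dbc$, the correction terms carry the factors $u(p_i,p)$ and $v(p_i,p)$, which vanish at $p$. Hence on $\ker\#_p$ the bracket is the ordinary bracket $uv_x-u_xv$, computed on each arc separately. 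This is continuous, since it vanishes at the $p_j$. Thus $\ker\#_p\cong\bigoplus_{j}\mathrm{Vect}_0([p_j,p_{j+1}])$ as Lie algebras, and the restriction of $\Omega_i$ is exactly the interval cocycle of Definition~\ref{def:interval-coc} on the $i$-th summand, extended by zero on the others.

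\textbf{Step 2: reduce to one summand.} Pick $i$ with $c_i\neq0$. Embed $\mathrm{Vect}_0([p_i,p_{i+1}])$ into $\ker\#_p$ by extending its fields by zero outside the $i$-th arc; this is a Lie algebra homomorphism. Pulling back preserves coboundaries, so if $\sum c_j\Omega_j$ were a coboundary, its pullback would be one too. That pullback equals $c_i\Omega$, with $\Omega$ the interval cocycle, because the other $\Omega_j$ vanish on fields supported in the $i$-th arc. Then $\Omega$ itself would be a coboundary, contradicting Theorem~\ref{NCB}. The "equivalently" clause follows because a one-dimensional extension is trivial exactly when its cocycle is a coboundary.

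\textbf{Main obstacle.} The delicate point is Step 1: checking that the isotropy bracket really is the plain componentwise bracket, and that the restriction argument applies to our nonstandard algebroid with infinite-dimensional fibers. For the first part, I would verify directly that the correction terms vanish when both sections vanish at the break points. This also requires extending fiber elements to smooth sections, for instance by rescaling each arc affinely as $p$ varies. For the second part, the restriction map's compatibility with the differential can be checked by hand from the formula for $d\Theta$, since the terms $L_{\#\mathbf u}\Theta(\mathbf v)$ vanish at $p$.
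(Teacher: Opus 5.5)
Your proposal is correct and follows the paper's proof. Both restrict to the isotropy algebra $\ker\#_p\cong\bigoplus_j\Vect_0([p_j,p_{j+1}])$, pass to a summand with $c_i\neq 0$ (on which the cocycle becomes $c_i\Omega$), and contradict Theorem~\ref{NCB}. Your extra checks (the correction terms vanish in the isotropy bracket, fiber elements extend to sections, and restriction sends $d\Theta$ to the Lie algebra coboundary given by $-\Theta_p$) make explicit steps the paper leaves implicit.
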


\begin{proof}
Consider the isotropy subalgebra over $(p_1, \dots, p_n) \in B$. By construction, it is a direct sum of algebras 
\[
\bigoplus_{i=1}^{n} \Vect_0([p_i, p_{i+1}]).
\]

Suppose, for contradiction, that $\Omega_c := \sum c_i \Omega_i$  is a coboundary and that $c_i \neq 0$ for some~$i$. Then the restriction of $\Omega_c$ to the subalgebra $\Vect_0([p_i, p_{i+1}])$ would also be a coboundary. This, however, contradicts Proposition~\ref{NCB} applied to the interval $[p_i, p_{i+1}]$. Therefore, $\Omega_c$ cannot be a coboundary for any nonzero $c$.
\end{proof}

\begin{definition}
The $n$-dimensional central extension of $\BVect_n(S^1)$ defined by the cocycles $\Omega_i$ is called the \textit{Virasoro algebroid}.
\end{definition}

\section{Lie groupoids and their central extensions}
\subsection{Lie groupoids}
\begin{definition}
A \textit{groupoid} $G \rightrightarrows B$ consists of two sets: a set $B$ of \textit{objects} and a set $G$ of \textit{arrows}, together with the following data:
\begin{enumerate}
\item Two maps $\Src, \Trg \colon G \to B$, called the \textit{source} and \textit{target} maps.

\item A partially defined binary operation $(g,h) \mapsto gh$ on $G$, defined for all pairs $g,h \in G$ such that $\Src(g) = \Trg(h)$, satisfying:
\begin{enumerate}
\item $\Src(gh) = \Src(h)$ and $\Trg(gh) = \Trg(g)$,
\item (Associativity) $g(hk) = (gh)k$ whenever either side is defined,
\item (Identity) For each $x \in B$, there exists an element $\id_x \in G$ such that $\Src(\id_x) = \Trg(\id_x) = x$ and
\[
\id_{\Trg(g)} \cdot g = g \cdot \id_{\Src(g)} = g
\quad \text{for all } g \in G,
\]
\item (Inverse) For each $g \in G$, there exists an element $g^{-1} \in G$ such that
\[
\Src(g^{-1}) = \Trg(g), \quad \Trg(g^{-1}) = \Src(g),
\]
and
\[
g^{-1}g = \id_{\Src(g)}, \qquad gg^{-1} = \id_{\Trg(g)}.
\]
\end{enumerate}
\end{enumerate}
\end{definition}

In what follows, we will often use the term \textit{groupoid} to refer to the set of arrows $G$. If $G \rightrightarrows B$ is a groupoid, we say that $G$ is a groupoid over $B$.

\begin{definition}
A groupoid $G \rightrightarrows B$ is called a \textit{Lie groupoid} if $G$ and $B$ are smooth manifolds, the source and target maps are submersions, and the maps
\[
(g,h) \mapsto gh, \quad x \mapsto \id_x, \quad g \mapsto g^{-1}
\]
are smooth.
\end{definition}

\begin{remark}
The domain of the multiplication map $(g,h) \mapsto gh$ is
\[
G^{(2)} := \{(g,h) \in G \times G \mid \Src(g) = \Trg(h)\}.
\]
It is a submanifold of $G \times G$ because $\Src$ and $\Trg$ are submersions; hence the smoothness condition for multiplication is well defined.
\end{remark}

\begin{example}\label{groupoids}
\begin{enumerate}[label=(\alph*)]
\item Any Lie group $G$ can be regarded as a Lie groupoid over a point.

\item For any smooth manifold $B$, the set $G := B \times B$ defines a Lie groupoid over $B$, called the \textit{pair groupoid}. The source and target maps are given by
\[
\Src(x,y) = x, \qquad \Trg(x,y) = y,
\]
and the multiplication is defined by
\[
(y,z)\cdot(x,y) := (x,z).
\]
\end{enumerate}
\end{example}

\subsection{Algebroids via bisections}

\begin{definition}
The \textit{Lie algebroid $A \to B$ associated to a Lie groupoid $\mathcal{G} \rightrightarrows B$} is a vector bundle $A \to B$ whose fiber over a point $x \in B$ is the tangent space at the identity element $\id_x$ to the source fiber $\mathcal{G}_x := \Src^{-1}(x)$. The \textit{anchor map} is defined as the differential of the target map,
\[
\# = d(\Trg) : A \to TB.
\]
\end{definition}

To define a Lie bracket on sections of \(A\), we use the group of \emph{bisections} of \(\mathcal{G}\). \begin{definition}
A \emph{bisection} of a Lie groupoid $G \rightrightarrows B$ is a submanifold $X \subset G$ such that both the source and target maps, when restricted to $X$, are diffeomorphisms onto $B$. Equivalently, a bisection can be viewed as a smooth map $\phi_X : B \to G$ satisfying $\Src \circ \phi_X = \mathrm{id}_B$ and such that $\Trg \circ \phi_X$ is a diffeomorphism  $B \to B$.
\end{definition}

The set of bisections of a Lie groupoid forms a Lie group under the multiplication
\begin{equation}\label{eq:bmult}
(\phi \psi)(x) := \phi(\Trg(\psi(x))) \cdot \psi(x).
\end{equation}
The identity element is given by $x \mapsto \id_x$, and the inverse of a bisection $\phi$ is
\[
\phi^{-1}(x) = \phi\big((\Trg \circ \phi)^{-1}(x)\big)^{-1}.
\]

Unless the base is a point, the group of bisections is infinite-dimensional, even if the groupoid itself is finite-dimensional.

\begin{example}
When $\mathcal{G}$ is the pair groupoid $X \times X \rightrightarrows X$, its group of bisections can be naturally identified with the group of diffeomorphisms of $X$.
\end{example}

Since a bisection is a section of the source map, a tangent vector at the identity bisection can be identified with a section of the bundle $A$. In this way, $\Gamma(A)$ is naturally identified with the Lie algebra of the group of bisections and therefore inherits a Lie bracket.

\subsection{Cocycles and central extensions}

We now discuss \emph{groupoid cocycles} and their role in defining \emph{central extensions of Lie groupoids}.

\begin{definition}
A smooth function $\chi \colon G^{(2)} \to \mathbb{R}$ is called a \emph{groupoid cocycle} if
\begin{equation}\label{eq:gr-cocycle}
\chi(\phi, \psi \eta) + \chi(\psi, \eta)
=
\chi(\phi \psi, \eta) + \chi(\phi, \psi),
\end{equation}
for all $(\phi,\psi,\eta) \in G^{(3)}$, where
\[
G^{(3)} := \{ (\phi,\psi,\eta) \in G \times G \times G \mid 
\Src(\phi) = \Trg(\psi), \ \Src(\psi) = \Trg(\eta) \}
\]
denotes the set of composable triples.
\end{definition}

A groupoid cocycle \(\chi\) defines a {central extension} of \(G\) by \(\mathbb{R}\) as follows. Consider the manifold
\[
\widehat{G} := G \times \mathbb{R},
\]
with source and target maps
\[
\Src(\phi, t) := \Src(\phi), \qquad \Trg(\phi, t) := \Trg(\phi),
\]
and multiplication
\[
(\phi, t)  (\psi, s) := (\phi  \psi,\, t + s + \chi(\phi, \psi)),
\]
for composable \(\phi, \psi \in G\). The cocycle condition \eqref{eq:gr-cocycle} ensures that this multiplication is associative, thus giving a one-dimensional central extension of \(G\). Higher-dimensional cocycles and central extensions are defined in a similar way.

The Lie algebroid of the extended groupoid $\hat G \rightrightarrows B$ is a central extension of the algebroid $A \to B$ corresponding to $G$. The relation between the corresponding cocycles is as follows. A cocycle $\chi$ on a groupoid $G \rightrightarrows B$ defines a $C^\infty(B)$-valued cocycle on the corresponding group of bisections by the formula
\[
X(\phi, \psi)(x) := \chi\big(\phi(\Trg(\psi(x))), \psi(x)\big),
\]
where $\phi, \psi$ are bisections of $G$ and $x \in B$. Differentiating this cocycle along one-parameter families of bisections gives rise to a cocycle $\Omega$ on the corresponding Lie algebra $\Gamma(A)$. This cocycle is also $C^\infty(B)$-valued, i.e., a bilinear map
\[
\Omega \colon \Gamma(A) \times \Gamma(A) \to C^\infty(B).
\]
Moreover, $\Omega$ is $C^\infty(B)$-bilinear, i.e. a $2$-form on $A$. The groupoid cocycle condition on $\chi$ ensures that this $2$-form is a Lie algebroid cocycle. Explicitly, for $u, v \in \Gamma(A)$, one has
\begin{gather*}
\Omega(u, v)(x)
= \frac{\partial^2}{\partial t \, \partial s} \Big|_{t=s=0} 
\Big( X(\phi^t, \psi^s)(x) - X(\psi^s, \phi^t)(x) \Big) \\
= \frac{\partial^2}{\partial t \, \partial s} \Big|_{t=s=0} 
\Big( \chi\big(\phi^t(\Trg(\psi^s(x))), \psi^s(x)\big) 
- \chi\big(\psi^s(\Trg(\phi^t(x))), \phi^t(x)\big) \Big).
\end{gather*}
where $\phi^t, \psi^s$ are one-parameter families of bisections with tangent vectors $u, v$ at $t=s=0$.

\section{The groupoid of broken diffeomorphisms}\label{sect:brok-diff}

In this section, we consider {homeomorphisms of the circle} that are smooth 
everywhere except at a finite set of points. Such \emph{broken diffeomorphisms} 
do not form a group, since composition is not always defined. Instead, they form a 
{groupoid}, because one can only compose broken diffeomorphisms whose sets 
of non-smooth points match: the target of one broken diffeomorphism must coincide 
with the source of the next. We now give a formal definition. Recall that $\B \subset (S^1)^n$ is the space of $n$-tuples of cyclically ordered points $p_1, \dots, p_n \in S^1$.

\begin{definition}
The \emph{broken diffeomorphism groupoid}
\[
\mathrm{BDiff}(S^1, n) \rightrightarrows B, \quad n \in \mathbb{N},
\]
consists of homeomorphisms \(\phi: S^1 \to S^1\) that are diffeomorphisms away from \(n\) cyclically ordered points \(p = (p_1, \dots, p_n) \in S^1\). Moreover, the restriction of \(\phi\) to each closed interval \([p_j, p_{j+1}]\) is a diffeomorphism onto its image.
The points \(p_j\) are called the \emph{broken points} of \(\phi\).

The \emph{source} and \emph{target} maps are given by
\[
s(\phi) = p, \qquad t(\phi) = \phi(p).
\]

Composition is defined whenever the source of one element coincides with the target of another, and is given by the usual composition of circle homeomorphisms.
\end{definition}



\begin{remark}
One can define the structure of a Lie--Fr\'echet groupoid on $\mathrm{BDiff}(S^1, n) $ by using the framework of the vortex sheet groupoid defined in \cite{IzoKh2017}. Here we give an {\it ad hoc} definition 
of the associated structures recovering,  in particular, the Lie algebroid defined above.
\end{remark}

Let us show that the algebroid $\BVect_n(S^1)$ of {broken vector fields} defined in Section~\ref{sec:algebroid} can be regarded as an infinitesimal object corresponding to $\mathrm{BDiff}(S^1, n) $. Consider the algebroid associated with the groupoid $\mathrm{BDiff}(S^1, n) $. By definition, its fiber over \(p \in B\) is the tangent space at the identity to the source fiber of the groupoid over \(p\). The source fiber over \(p = (p_1, \ldots, p_n)\) consists of diffeomorphisms that are broken at \((p_1, \ldots, p_n)\). Considering a smooth family of such diffeomorphisms starting at the identity, the tangent vector at \(t = 0\) is precisely a broken vector field, as defined in Section~\ref{sec:algebroid}. This shows that, as a vector bundle, the Lie algebroid associated with \(\mathrm{BDiff}(S^1, n)\) coincides with \(\mathrm{BVect}(S^1, n)\).

Next, we compute the anchor map. By definition, for a curve \(\phi^t\) of broken diffeomorphisms, we have
\[
\#\!\left(\left.\frac{d}{dt}\right|_{t=0} \phi^t\right)
= \left.\frac{d}{dt}\right|_{t=0} {\Trg}(\phi^t)
= \left.\frac{d}{dt}\right|_{t=0} \phi^t(p).
\]
Thus, the anchor is given by evaluation at the points \((p_1, \ldots, p_n)\), as required.

Finally, we show that algebroid brackets agree. To this end, we first describe the  {group of bisections} of the groupoid \(\mathrm{BDiff}(S^1, n)\).

By definition, a bisection is, in particular, a section of the source map
\[
\mathrm{BDiff}(S^1, n) \to B,
\]
that is, a family $\phi(x,p)$ of broken diffeomorphisms parametrized by \(p \in B\). For each \(p \in B\), the map \(x \mapsto \phi(x,p)\) is an element of \(\mathrm{BDiff}(S^1, n)\), and its broken points are precisely \(p\). Such a family $\phi(x,p)$ defines a bisection if and only if the map
\[
p \mapsto \phi(p,p) := (\phi(p_1, p), \dots, \phi(p_n, p)).
\]
is a diffeomorphism \(B \to B\).

Any bisection  $\phi(x,p)$ can be identified with a map
\[
\Phi: S^1 \times B \to S^1 \times B,
\]
given by
\[
\Phi(x,p) = (\phi(x,p), \, \phi(p,p)).
\]\
This is a bisection when the second component is a diffeomorphism \(B \to B\).
\begin{proposition}
This defines an isomorphism between the group of bisections of the groupoid \(\mathrm{BDiff}(S^1, n)\) and the group of homeomorphisms \(\Phi\) of \(S^1 \times B\) satisfying the following conditions:

1) There exists a diffeomorphism \(f: B \to B\) such that the diagram
\[
\begin{tikzcd}
S^1 \times B \arrow[r, "\Phi"] \arrow[d, "\pi_B"'] & S^1 \times B \arrow[d, "\pi_B"] \\
B \arrow[r, "f"] & B
\end{tikzcd}
\]
commutes, where \(\pi_B(x,p) = p\).

2) \(\Phi\) maps the complement of the union of hypersurfaces \(x = p_i\) diffeomorphically to itself. Moreover, if we cut \(S^1 \times B\) along these hypersurfaces, then \(\Phi\) extends to a self-diffeomorphism of the resulting manifold with boundary \(\widetilde{S^1 \times B}\). 
\end{proposition}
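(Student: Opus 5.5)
The plan is to verify three things in order: the map $\phi\mapsto\Phi$ lands in the described set, it is a bijection onto that set, and it carries the bisection product \eqref{eq:bmult} to composition of maps of $S^1\times B$.

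\emph{Image.} Let $\phi(x,p)$ be a bisection and set $f(p):=\phi(p,p)=\Trg(\phi(\cdot,p))$. By the definition of a bisection, $f$ is a diffeomorphism of $B$. The second component of $\Phi$ is $f(p)$, so $\pi_B\circ\Phi=f\circ\pi_B$, which is condition 1). For each fixed $p$, the map $x\mapsto\phi(x,p)$ is a circle homeomorphism. Since $\Phi$ covers the bijection $f$, it is a fiberwise homeomorphism over a diffeomorphism, hence a homeomorphism of $S^1\times B$. Its inverse is $(y,q)\mapsto(\phi(\cdot,f^{-1}q)^{-1}(y),f^{-1}q)$, and this is continuous because fiberwise inversion of a continuous family of circle homeomorphisms is continuous. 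Each $\phi(\cdot,p)$ has breaks exactly at $p$ and sends $p_i$ to $f(p)_i$. Therefore $\Phi$ maps the hypersurface $\{x=p_i\}$ onto $\{x=p_i\}$: a point $(p_i,p)$ goes to $(f(p)_i,f(p))$. Consequently $\Phi$ preserves the complement of these hypersurfaces. Smoothness of $\phi$ in $(x,p)$ up to the boundary on each closed piece $\{p_j\le x\le p_{j+1}\}$ gives a diffeomorphism of the cut manifold $\widetilde{S^1\times B}$, because the restriction of $\phi$ to each closed interval is a diffeomorphism onto its image. The inverse is smooth up to the boundary for the same reason, applied fiberwise with smooth dependence on parameters. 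This gives condition 2).

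\emph{Bijectivity.} Conversely, given $\Phi$ satisfying 1) and 2), write $\Phi(x,p)=(\phi(x,p),f(p))$. Condition 2) forces $\Phi$ to permute the hypersurfaces $\{x=p_i\}$. Since these are the boundary components of $\widetilde{S^1\times B}$ and $\Phi$ is a homeomorphism preserving the cyclic order on fibers, we get $\phi(p_i,p)=f(p)_{i}$. Possibly a cyclic relabeling occurs here, which I would handle by the convention on $B$. Hence $\phi(\cdot,p)$ is a broken diffeomorphism with breaks at $p$ and target $f(p)$, smooth in $p$, so it is a bisection. The two constructions are inverse to each other by definition.

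\emph{Homomorphism.} By \eqref{eq:bmult}, $(\phi\psi)(\cdot,p)=\phi(\cdot,\Trg\psi(p))\circ\psi(\cdot,p)$. Applying $\Phi\circ\Psi$ to $(x,p)$ gives $\bigl(\phi(\psi(x,p),g(p)),\,f(g(p))\bigr)$, where $g(p)=\psi(p,p)$, and this is exactly the image of $\phi\psi$. The identity bisection corresponds to the identity map, so the correspondence is a group isomorphism.

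The main obstacle will be the regularity bookkeeping in condition 2): showing that smoothness up to the boundary of $\phi$ on the cut manifold is equivalent to the family being a smooth bisection in the (ad hoc) smooth structure on $\mathrm{BDiff}(S^1,n)$. I would simply adopt this as the definition of smoothness of families, which makes the equivalence tautological apart from checking the inverse.
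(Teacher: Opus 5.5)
Your homomorphism computation is the same as the paper's, and it is in fact the only thing the paper's proof verifies. Your check that every bisection yields a $\Phi$ satisfying 1) and 2) is also fine.

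The gap is in the surjectivity step, at ``we get $\phi(p_i,p)=f(p)_i$''. Condition 2) only says that $\Phi$ preserves the \emph{union} of the hypersurfaces $\{x=p_i\}$. From this you can only conclude $\phi(p_i,p)=f(p)_{\sigma(i)}$ for some permutation $\sigma$, which is a cyclic shift in the orientation-preserving case. When $\sigma\neq\mathrm{id}$, the map $\Phi$ is genuinely not in the image.

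For example, for $n\ge 2$ take $f=\mathrm{id}_B$, and let $\phi(\cdot,p)$ map each arc $[p_i,p_{i+1}]$ affinely (in arc length) onto $[p_{i+1},p_{i+2}]$. Then $\Phi(x,p)=(\phi(x,p),p)$ satisfies 1) and 2). The only candidate bisection is the family $\phi$ itself, since the first component of $\Phi$ determines it. But its associated map is $(x,p)\mapsto\bigl(\phi(x,p),(p_2,\dots,p_n,p_1)\bigr)\neq\Phi$.

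No convention on $B$ absorbs this. The space $B\subset(S^1)^n$ consists of labelled tuples, and $f$ is read off from $\Phi$ via 1).

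The repair is to read 2) as requiring $\Phi(\{x=p_i\})=\{x=p_i\}$ for each $i$ separately, i.e.\ $\phi(p_i,p)=f(p)_i$. This is exactly the property your image step shows every bisection has. With that reading, your converse construction and the rest of your argument go through. As literally stated, 1) and 2) describe a larger group, in which the bisections form the subgroup of elements with $\sigma=\mathrm{id}$.
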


\begin{proof}
Let \(\phi_p\) and \(\psi_p\) be bisections of the groupoid \(\mathrm{BDiff}(S^1, n)\). The corresponding homeomorphisms of \(S^1 \times B\) are
\[
\Phi(x,p) = (\phi(x,p), \, \phi(p,p)) \quad \text{and} \quad 
\Psi(x,p) = (\psi(x,p), \, \psi(p,p)).
\] 

Their composition \(\Psi \circ \Phi\) is
\[
(\Psi \circ \Phi)(x,p) = \Psi\big(\Phi(x,p)\big) 
= \Psi\big(\phi(x,p), \, \phi(p,p)\big) 
= \big(\psi_{\phi(p,p)}(\phi(x,p)), \, \psi_{\phi(p,p)}(\phi(p,p))\big).
\]

This exactly corresponds to the product of the bisections \(\psi\) and \(\phi\). Therefore, the group structure on bisections coincides with the composition of the associated homeomorphisms \(\Phi\) of \(S^1 \times B\). 
\end{proof}
Since the group of such \(\Phi\) can be identified with diffeomorphisms of \(\widetilde{S^1 \times B}\) that descend to \(B\), it is a Lie group whose Lie algebra consists of vector fields tangent to each boundary component \(x = p_i\). Such vector fields have the form
\[
u(x, p)\,\partial_x + \sum_{i = 1}^n u(p_i, p)\,\partial_{p_i}.
\]

It follows from the proposition that there is an isomorphism of Lie algebras between \(\Gamma(\mathrm{BVect}(S^1, n))\) and the algebra of such vector fields with the standard Lie bracket. Explicitly, the isomorphism is given by
\[
u(x, p)\,\partial_x \;\longmapsto\; u(x, p)\,\partial_x + \sum_{i = 1}^n u(p_i, p)\,\partial_{p_i}.
\]

Moreover, we have already verified that the Lie bracket on these vector fields coincides with the algebroid bracket on \(\mathrm{BVect}\). This completes the proof that the Lie algebroid of \(\mathrm{BDiff}(S^1, n)\) is \(\mathrm{BVect}(S^1, n)\).

\section{The groupoid Bott cocycle and Virasoro groupoid}
\subsection{The groupoid cocycle}
\begin{definition}
The \textit{broken Virasoro groupoid} is the central extension groupoid 
\[
\mathrm{BVir}(S^1, n) \rightrightarrows B,
\]
where 
\[
\mathrm{BVir}(S^1, n) = \mathrm{BDiff}(S^1, n) \times \mathbb{R}^n,
\]
and the multiplication is given by
\[
(\phi, t) \cdot (\psi, s) = (\phi \circ \psi, t + s + \chi(\phi, \psi))
\]
whenever \(\mathrm{src}(\phi) = \mathrm{trg}(\psi)\). Here \(\chi = (\chi_1, \ldots, \chi_n) \in \mathbb{R}^n\) is the groupoid \(2\)-cocycle defined by
\[
\chi_i(\phi, \psi) = \int_{p_i}^{p_{i + 1}} \log(\phi_x \circ \psi)\, d\log \psi_x
\]
where \(\mathrm{src}(\psi) = p = (p_1, \ldots, p_n)\), \(p_{n + 1} = p_1\).
\end{definition}

\begin{theorem}\label{thm:vir-groupoid}
The broken Virasoro groupoid defined above is indeed a groupoid, 
 i.e. the composition satisfies the associativity relation. Equivalently, \(\chi\) is a groupoid cocycle.
\end{theorem}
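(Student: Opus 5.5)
We will verify the cocycle identity \eqref{eq:gr-cocycle} directly for each component \(\chi_i\), for a composable triple \((\phi,\psi,\eta)\) with \(\Src(\eta)=p\), \(\Src(\psi)=\eta(p)=:q\), \(\Src(\phi)=\psi(q)=:r\). The key observation is that every integral involved becomes an integral over the single interval \([p_i,p_{i+1}]\) after changing variables. The maps \(\eta\), \(\psi\circ\eta\) restricted to \([p_i,p_{i+1}]\) are diffeomorphisms onto \([q_i,q_{i+1}]\) and \([r_i,r_{i+1}]\) respectively, and they are orientation preserving. The integrands are smooth on each closed subinterval, since all derivatives are one-sided there and all breaks occur exactly at the endpoints. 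So there are no distributional contributions at the break points. This is exactly why the form \(\log(\phi_x\circ\psi)\) is used rather than \(\log((\phi\circ\psi)_x)\): the latter would involve \(\psi_x\) at points where breaks of \(\phi\) and \(\psi\) line up, but with the former each factor lives on its own source interval.

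Concretely, write \(a=\log(\phi_x)\circ\psi\circ\eta\), \(b=\log(\psi_x)\circ\eta\), \(c=\log\eta_x\), all functions on \([p_i,p_{i+1}]\). By the chain rule, valid on each open subinterval and extending to the closed ones, we have \(\log((\psi\circ\eta)_x)=b+c\) and \(\log((\phi\circ\psi)_x)\circ\eta=a+b\). We then express the four terms:
\begin{gather}
\chi_i(\phi\psi,\eta)=\int_{p_i}^{p_{i+1}}(a+b)\,dc,\qquad
\chi_i(\phi,\psi\eta)=\int_{p_i}^{p_{i+1}} a\,d(b+c),\\
\chi_i(\psi,\eta)=\int_{p_i}^{p_{i+1}} b\,dc,\qquad
\chi_i(\phi,\psi)=\int_{q_i}^{q_{i+1}}\log(\phi_x\circ\psi)\,d\log\psi_x=\int_{p_i}^{p_{i+1}} a\,db.
\end{gather}
The last equality uses the substitution \(x=\eta(y)\), which is legitimate because \(\eta\) maps \([p_i,p_{i+1}]\) diffeomorphically onto \([q_i,q_{i+1}]\) and the integrand is a pullback-invariant 1-form expression \(f\,dg\). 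The same invariance is what lets us write the other terms as pullbacks. Here it is essential that the index \(i\) in \(\chi_i(\phi,\psi)\) refers to the source points \(q=\Src(\psi)\), and that \(\eta\) maps \(p_i\mapsto q_i\) preserving the cyclic labeling.

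With these expressions the identity becomes linear algebra of 1-forms. The left side is \(\int a\,db+\int a\,dc+\int b\,dc\), and the right side is \(\int a\,dc+\int b\,dc+\int a\,db\), so the two sides agree term by term. No integration by parts is needed, so no boundary terms arise, and this is the reason the chosen form works in the broken setting.

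The main obstacle is not the algebra but the justification of the change of variables and of the chain-rule identities up to the endpoints. We need to check three things. First, each restriction \(\phi|_{[r_j,r_{j+1}]}\) and the others are smooth up to the boundary, so the integrands are bounded and smooth on closed intervals; this is guaranteed by the definition of \(\mathrm{BDiff}(S^1,n)\). Second, the labeling of break points is compatible under composition, that is, \(\Trg(\eta)_i=\eta(p_i)\) and cyclic order is preserved. Third, the case \(p_{n+1}=p_1\) (the interval wrapping around the circle) is handled by working in a lift. We will spell out these points briefly. Smoothness in the groupoid sense (the map \(\chi\colon G^{(2)}\to\mathbb R^n\) being smooth) we would treat as evident from the formula.
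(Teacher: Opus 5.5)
Your proposal is correct and follows essentially the paper's proof. The paper likewise splits \(\log((\psi\circ\eta)_x)=\log(\psi_x\circ\eta)+\log\eta_x\) by the chain rule, identifies \(\int_{p_i}^{p_{i+1}}\log(\phi_x\circ\psi\circ\eta)\,d\log(\psi_x\circ\eta)\) with \(\chi_i(\phi,\psi)\) via the substitution \(x=\eta(y)\) onto \([\eta(p_i),\eta(p_{i+1})]\), and recombines the remaining two terms into \(\chi_i(\phi\circ\psi,\eta)\). Your \(a,b,c\) bookkeeping and your remarks on one-sided smoothness at the endpoints and on the labeling \(\Trg(\eta)_i=\eta(p_i)\) make explicit what the paper leaves implicit.
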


\begin{proof}
To demonstrate the cocycle identity, consider 
\(\phi, \psi, \eta \in \mathrm{BDiff}(S^1, n)\) 
such that \(\phi \circ \psi \circ \eta\) is defined. Rewriting the left-hand side of \eqref{eq:gr-cocycle}, we have
\begin{gather*}
\chi_i(\phi, \psi \circ \eta) + \chi_i(\psi, \eta) 
 \\ = \int_{p_i}^{p_{i+1}} 
      \log(\phi_x \circ \psi \circ \eta)\, d\log((\psi \circ \eta)_x) 
 + \int_{p_i}^{p_{i+1}} \log(\psi_x \circ \eta)\, d\log \eta_x.
\end{gather*}

Applying the chain rule \((\psi \circ \eta)_x = (\psi_x \circ \eta) \cdot \eta_x\), this further rewrites as
\begin{gather*}
\int_{p_i}^{p_{i+1}} \log(\phi_x \circ \psi \circ \eta)\, d\log(\psi_x \circ \eta) + \int_{p_i}^{p_{i+1}} \log(\phi_x \circ \psi \circ \eta)\, d\log \eta_x  
 + \int_{p_i}^{p_{i+1}} \log(\psi_x \circ \eta)\, d\log \eta_x \\ = \int_{\eta(p_i)}^{\eta(p_{i+1})} \log(\phi_x \circ \psi)\, d\log \psi_x  
 + \int_{p_i}^{p_{i+1}} \log((\phi \circ \psi)_x \circ \eta)\, d\log \eta_x \\ = \chi_i(\phi, \psi) + \chi_i(\phi \circ \psi, \eta).\qedhere
\end{gather*}
\end{proof}
\begin{remark}
The Virasoro-Bott cocycle on the group $\Diff(S^1)$ is usually written in the form
\[
\int_{S^1} \log(\phi \circ \psi)_x \, d\log \psi_x.
\]

The relation between the group and groupoid versions is as follows. 
The groupoid cocycle $\sum \chi_i$ corresponding to the whole circle is
\[
\sum \chi_i=\sum_{i=1}^n \int_{p_i}^{p_{i+1}} \log(\phi_x \circ \psi)\, d\log \psi_x
= \int_{S^1} \log(\phi_x \circ \psi)\, d\log \psi_x.
\]

Using the chain rule \(\phi_x \circ \psi = (\phi \circ \psi)_x \cdot (\psi_x)^{-1}\), this rewrites as
\begin{gather}
\sum_{i=1}^n \int_{p_i}^{p_{i+1}} \log(\phi \circ \psi)_x\, d\log \psi_x 
- \sum_{i=1}^n \int_{p_i}^{p_{i+1}} \log \psi_x\, d\log \psi_x \\
= \int_{S^1} \log(\phi \circ \psi)_x\, d\log \psi_x 
- \frac{1}{2} \sum_{i=1}^n \bigl(\log^2 \psi_x(p_{i+1}^-) - \log^2 \psi_x(p_i^+)\bigr).
\end{gather}

Here \(f(p_i^\pm)\) denotes the right and left limits at \(p_i\), respectively. Rewriting this further, we obtain
\begin{gather}
\int_{S^1} \log(\phi \circ \psi)_x\, d\log \psi_x 
+ \frac{1}{2} \sum_{i=1}^n \bigl(\log^2 \psi_x(p_i^+) - \log^2 \psi_x(p_i^-)\bigr).
\end{gather}

For a diffeomorphism \(\psi\) without breaks of the derivative \(\psi_x\), the non-integral terms vanish, and we recover the usual Bott cocycle.
\end{remark}

\subsection{From groupoid to algebroid cocycle}

\begin{theorem}\label{prop:gr-alg-cocycle}

The algebroid cocycle corresponding to the groupoid cocycle $\chi_i$ coincides with 
the  Gelfand-Fuchs cocycle
$$\Omega_i = \int_{p_i}^{p_{i+1}} \bigl(u_x v_{xx} - u_{xx} v_x\bigr)\,dx.$$
\end{theorem}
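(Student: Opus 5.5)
We apply the differentiation formula from the subsection on cocycles and central extensions:
\[
\Omega(u,v)(p)=\frac{\partial^2}{\partial t\,\partial s}\Big|_{t=s=0}\Big(\chi_i\big(\phi^t(\Trg(\psi^s(p))),\psi^s(p)\big)-\chi_i\big(\psi^s(\Trg(\phi^t(p))),\phi^t(p)\big)\Big).
\]
Here $\phi^t,\psi^s$ are one-parameter families of bisections with $\phi^t=\mathrm{id}+t\,u+O(t^2)$ and $\psi^s=\mathrm{id}+s\,v+O(s^2)$ in each source fiber. The target point $\Trg(\psi^s(p))$ moves with $s$, so $\phi^t$ evaluated there is the broken diffeomorphism $x\mapsto x+t\,u(x,p+s\,v(p,p))+\dots$. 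We want the mixed derivative to reduce to a fiberwise expression, so we must see how the base dependence enters.

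\textbf{Step 1: the factor $d\log\psi_x$ localizes the expansion.} At $s=0$ we have $d\log\psi^s_x\equiv 0$, so any contribution to $\partial_t\partial_s$ must take its $s$-derivative from this factor. To first order,
\[
d\log\psi^s_x = s\,v_{xx}\,dx + O(s^2),
\]
so the $s$-variation of the integration limits, of the base point, and of the composition ``$\circ\psi$'' contribute only at order $s^2$ or $ts^2$. The same reasoning gives $\log(\phi^t_x\circ\psi^s)=t\,u_x+O(t^2,ts)$, and the $O(ts)$ term is harmless since it multiplies an $O(s)$ factor. Hence
\[
\partial_t\partial_s\,\chi_i\big(\phi^t(\cdot),\psi^s\big)\Big|_{0}=\int_{p_i}^{p_{i+1}}u_x\,v_{xx}\,dx .
\]
The $\partial_{p}$-dependence of $u$ through $\Trg(\psi^s(p))$ appears only at order $ts\cdot s$, so it drops out.

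\textbf{Step 2: antisymmetrize.} Swapping the roles of $u$ and $v$, the second term gives $\int_{p_i}^{p_{i+1}}v_x u_{xx}\,dx$. The difference is therefore
\[
\int_{p_i}^{p_{i+1}}\bigl(u_xv_{xx}-u_{xx}v_x\bigr)\,dx=\Omega_i(u,v).
\]
The one-sided derivatives at the break points are well defined because each piece is smooth up to the boundary.

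\textbf{Main obstacle.} The delicate step is justifying the orders in Step 1. The limits of integration are $p_i$, the source of $\psi^s$, and these are fixed. However, the inner bisection is evaluated at the moved base point, so we must confirm that this reparametrization enters only through $\log(\phi^t_x\circ\psi^s)$, which is already multiplied by a factor of order $s$. I would write the integrand explicitly, expand it as a power series in $(t,s)$, and check that every term involving $\partial_{p_j}u$ carries at least $s^2$. One point needs care: the convention in the displayed formula pairs $\chi(\phi,\psi)$ with the integral over the source of $\psi$. I will confirm that this matches the definition of $\chi_i$, in which $p=\mathrm{src}(\psi)$.
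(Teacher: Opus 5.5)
Your proposal is correct and follows essentially the same route as the paper. Both use the bisection differentiation formula and observe that $d\log\psi^s_x$ vanishes at $s=0$, so the $s$-derivative must fall on that factor and yields $\int_{p_i}^{p_{i+1}} u_x\,dv_x$; both then antisymmetrize. Your explicit order-counting for the base-point shift $\Trg(\psi^s(p))$, and your remark that the limits are fixed because $p=\mathrm{src}(\psi^s)$, only spell out what the paper's proof uses implicitly.
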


\begin{proof}
Let $u(x,p)\,\partial_x$ and $v(x,p)\,\partial_x$ be sections of $\mathrm{BVect}(S^1,n)$, viewed as tangent vectors to the group of bisections. Let $\phi^t(\cdot,p)$ and $\psi^s(\cdot,p)$ be curves in the group of bisections such that
\[
\left.\frac{\partial}{\partial t}\right|_{t=0} \phi^t(x,p) = u(x,p),
\qquad
\left.\frac{\partial}{\partial s}\right|_{s=0} \psi^s(x,p) = v(x,p).
\]
The algebroid cocycle is given by
\begin{gather*}
\Omega_i(u,v)(p)
=
\frac{\partial^2}{\partial t \, \partial s}\Big|_{t=s=0}
\Big(
\chi_i(\phi^t(\cdot, \psi^s(p,p)), \psi^s(\cdot,p))
-
\chi_i(\psi^s(\cdot, \phi^t(p,p)), \phi^t(\cdot,p))
\Big).
\end{gather*}
We first compute
\begin{gather*}
\frac{\partial^2}{\partial t \, \partial s}\Big|_{t=s=0}
\chi_i(\phi^t(\cdot, \psi^s(p,p)), \psi^s(\cdot,p)) \\
=
\frac{\partial^2}{\partial t \, \partial s}\Big|_{t=s=0}
\int_{p_i}^{p_{i+1}}
\big(\log(\phi^t(x,\psi^s(p,p)))_x \circ \psi^s(x,p)\big)\,
d \log \psi^s(x,p)_x.
\end{gather*}
Since $\psi^0(x,p)=x$, we have
\[
d \log \psi^s(x,p)_x \Big|_{s=0} =\,\, 0.
\]
Thus,
\begin{gather*}
\frac{\partial^2}{\partial t \, \partial s}\Big|_{t=s=0}
\Big(
\big(\log(\phi^t(x,\psi^s(p,p)))_x \circ \psi^s(x,p)\big)\,
d \log \psi^s(x,p)_x
\Big) \\
=
\frac{\partial}{\partial t}\Big|_{t=0}
\Big(
\log(\phi^t(x,p))_x
\Big)
\cdot
\frac{\partial}{\partial s}\Big|_{s=0} \Big( d 
\log \psi^s(x,p)_x \Big)
 \\
\vphantom{\frac{\partial}{\partial s}\Big|_{s=0} }=u_x(x,p)\, d v_x(x,p).
\end{gather*}
Hence,
\begin{gather*}
\frac{\partial^2}{\partial t \, \partial s}\Big|_{t=s=0}
\chi_i(\phi^t(\cdot, \psi^s(p,p)), \psi^s(\cdot,p))
=
\int_{p_i}^{p_{i+1}} u_x\, d v_x.
\end{gather*}
Similarly,
\begin{gather*}
\frac{\partial^2}{\partial t \, \partial s}\Big|_{t=s=0}
\chi_i(\psi^s(\cdot, \phi^t(p,p)), \phi^t(\cdot,p))
=
\int_{p_i}^{p_{i+1}} v_x\, d u_x.
\end{gather*}
Subtracting, we obtain
\begin{gather*}
\Omega_i(u,v)(p)
=
\int_{p_i}^{p_{i+1}}
\big(u_x\, d v_x - v_x\, d u_x\big). \qedhere
\end{gather*}
\end{proof}

\begin{corollary}
The groupoid cocycle $\chi(\phi, \psi)$ is nontrivial, i.e. is not a coboundary. 
\end{corollary}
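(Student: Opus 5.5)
The plan is to argue by contradiction and pass to the infinitesimal level. Suppose that some nonzero combination $\sum c_i\chi_i$, or $\chi$ itself viewed as $\mathbb{R}^n$-valued, is a groupoid coboundary. That is, there is a smooth function $f\colon \mathrm{BDiff}(S^1,n)\to\mathbb{R}$ with
\[
\chi(\phi,\psi)=f(\phi)+f(\psi)-f(\phi\circ\psi)
\]
for all composable pairs. This is exactly the condition that the map $(\phi,t)\mapsto(\phi,t+f(\phi))$ is an isomorphism between the direct product groupoid and the extension $\mathrm{BVir}(S^1,n)$.

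The first step is to show that differentiation takes groupoid coboundaries to algebroid coboundaries. I would pass to bisections, as in the construction preceding Theorem~\ref{prop:gr-alg-cocycle}, and define the $C^\infty(B)$-valued function $F(\phi)(p):=f(\phi(\cdot,p))$ on the group of bisections. Then $X(\phi,\psi)$ equals $F(\phi)\circ(\Trg\circ\psi)+F(\psi)-F(\phi\psi)$, which is the coboundary of $F$ for the natural action of bisections on $C^\infty(B)$ through $\Trg$. We apply the same mixed second derivative $\partial_t\partial_s$ and skew-symmetrize. The terms coming from $F(\psi^s)$ and $F(\phi^t)$ depend on only one parameter, so they contribute nothing. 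The remaining terms produce
\[
\Omega(u,v)=L_{\#u}\Theta(v)-L_{\#v}\Theta(u)-\Theta(\dbo u,v\dbc),
\qquad \Theta(u):=\frac{d}{dt}\Big|_{t=0}F(\phi^t),
\]
up to sign conventions. Here $\Theta$ is a linear form on each fiber, namely the differential of $f$ at the identity arrows restricted to source fibers. It is therefore a section of $A^*$, and so $\Omega=d\Theta$ is an algebroid coboundary.

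The second step is to combine this with the earlier results. By Theorem~\ref{prop:gr-alg-cocycle}, the algebroid cocycle obtained from $\sum c_i\chi_i$ is $\sum c_i\Omega_i$. By the Proposition following Theorem~\ref{thm:GFA}, this is not a coboundary for $c\neq 0$. That non-coboundary statement was itself proved by restricting to an isotropy algebra $\Vect_0([p_i,p_{i+1}])$ and invoking Theorem~\ref{NCB}. This gives the contradiction, so $\chi$, and every nonzero combination of its components, is nontrivial.

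The main obstacle is the first step. One has to check carefully that the mixed derivative of the coboundary of $F$ yields exactly the algebroid differential $d\Theta$, including the anchor terms $L_{\#u}$, rather than only the Lie algebra coboundary on $\Gamma(A)$. One also has to make sure $\Theta$ is $C^\infty(B)$-linear. I would handle this by the standard Lie-theoretic fact that the derivative of a group coboundary with values in a module is the Lie algebra coboundary for the derived module. Here the Lie algebra is $\Gamma(A)$ acting on $C^\infty(B)$ by $u\mapsto L_{\#u}$. $C^\infty(B)$-linearity of $\Theta$ follows because $F(\phi)(p)$ depends only on the value of the bisection over $p$.
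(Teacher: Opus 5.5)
Your proposal is correct and follows the paper's route: the paper's proof is the one-line remark that the result follows from nontriviality of the algebroid cocycle $\sum c_i\Omega_i$ (Theorem~\ref{prop:gr-alg-cocycle} combined with the isotropy-algebra reduction to Theorem~\ref{NCB}). You also spell out the step the paper leaves implicit, that differentiating a groupoid coboundary $f(\phi)+f(\psi)-f(\phi\circ\psi)$ along bisections gives the algebroid coboundary $d\Theta$ (with $\Theta$ the differential of $f$ along source fibers at the identity arrows), and that mechanism is the right one.
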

\begin{proof}
This follows from non-triviality of the algebroid cocycle.
\end{proof}

\section{Diffeomorphisms between intervals and their central extension}\label{sec:interval}


Similarly, one may define the \emph{segment diffeomorphism groupoid}
\[
\mathrm{BDiff}(I) \rightrightarrows P,
\]
whose morphisms consist of orientation-preserving diffeomorphisms
\(\phi: [a,b] \to \mathbb{R}\)
from any interval \([a,b]\) onto its image \([\phi(a), \phi(b)]\), equipped with the natural composition of maps wherever defined. The base of this groupoid is given by
\[
P = \{(x,y) \in \mathbb{R}^2 \mid x < y\},
\]
the open half-plane. Its associated algebroid consists of smooth vector fields defined on intervals, without any requirement that they vanish at the endpoints. The fiber over \((a,b) \in P\) is the space of vector fields with domain \([a,b]\). The anchor map assigns to such a vector field its values at the endpoints, while the Lie bracket is defined in the same manner as for the algebroid of broken vector fields on the circle; see Definition~\ref{def:alg-broken}.

\begin{proposition}\label{prop:interval}
The segment diffeomorphism groupoid has a one-dimensional central extension given by the cocycle
\[
\chi(\phi, \psi) = \int_{a}^{b} \log(\phi_x \circ \psi)\, d\log (\psi_x)
\]
where $\Src\, \psi = [a,b]$. The corresponding algebroid cocycle is
\[
\Omega\!\left(u(x)\partial_x,\, v(x)\partial_x\right)
:=
\int_{a}^{b} \bigl(u_x v_{xx} - u_{xx} v_x\bigr)\,dx
\]
\end{proposition}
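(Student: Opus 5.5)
The plan is to transplant the proofs of Theorem~\ref{thm:vir-groupoid} and Theorem~\ref{prop:gr-alg-cocycle} to the interval setting. These proofs never used that the intervals $[p_i,p_{i+1}]$ tile a circle. They only used three facts: the chain rule, a change of variables along a single interval, and the fact that the base point is determined by the source.

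\textbf{Cocycle identity.} First I will check that $\chi$ is a groupoid cocycle. Take a composable triple $(\phi,\psi,\eta)$ with $\Src\,\eta=[a,b]$, so that $\Src\,\psi=[\eta(a),\eta(b)]$. Expand $\chi(\phi,\psi\circ\eta)$ using
\[
d\log(\psi\circ\eta)_x=d\log(\psi_x\circ\eta)+d\log\eta_x .
\]
The term $\int_a^b\log(\phi_x\circ\psi\circ\eta)\,d\log(\psi_x\circ\eta)$ becomes $\chi(\phi,\psi)$ after the substitution $y=\eta(x)$. This substitution is legitimate because $\eta$ is an orientation-preserving diffeomorphism $[a,b]\to[\eta(a),\eta(b)]$, and the integration limits match the definition of $\chi(\phi,\psi)$ precisely because $\Src\,\psi=\Trg\,\eta$.

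The remaining two terms combine, via $\log(\phi_x\circ\psi)+\log\psi_x=\log(\phi\circ\psi)_x$, into $\chi(\phi\circ\psi,\eta)$. This is exactly the computation in Theorem~\ref{thm:vir-groupoid} with $n=1$ and no wraparound. Smoothness of $\chi$ on $G^{(2)}$ is clear, since all integrands are smooth up to the endpoints. Hence $\chi$ defines a central extension.

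\textbf{Algebroid cocycle.} Next I will identify the induced algebroid cocycle. I will use the same description of bisections as before: families $\phi(x,a,b)$ defined on $[a,b]$ such that $(a,b)\mapsto(\phi(a),\phi(b))$ is a diffeomorphism of $P$. Equivalently, these are diffeomorphisms of the wedge $\{(x,a,b): a\le x\le b\}$ that preserve the boundary faces and descend to $P$.

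I then run the differentiation from Theorem~\ref{prop:gr-alg-cocycle}. The key point is that $d\log\psi^s_x$ vanishes at $s=0$. Therefore only the term $\partial_t\log\phi^t_x\cdot\partial_s\,d\log\psi^s_x=u_x\,dv_x$ survives. The dependence of $\phi^t$ on the moved base point $\psi^s(a,b)$ enters only through this vanishing factor, so it drops out. Antisymmetrizing gives
\[
\int_a^b\bigl(u_x v_{xx}-u_{xx}v_x\bigr)\,dx .
\]
As a consistency check, the cocycle property of this $\Omega$ also follows from the proof of Theorem~\ref{thm:GFA} with $[p_i,p_{i+1}]$ replaced by $[a,b]$. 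The boundary terms there are still absorbed by $L_{\#\mathbf u}\Omega$, now with $\#\mathbf u=(u(a),u(b))$.

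\textbf{Main obstacle.} The main point needing care is the justification that the Lie algebroid of $\mathrm{BDiff}(I)$ is as described, with the bracket of Definition~\ref{def:alg-broken}. That identification is what makes the differentiation formula from the groupoid section applicable. I would handle it as in Section~\ref{sect:brok-diff}: identify the bisections with diffeomorphisms of the wedge, and the Lie algebra with vector fields $u\,\partial_x+u(a)\,\partial_a+u(b)\,\partial_b$ tangent to the faces $x=a$ and $x=b$.

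Nontriviality is not part of the statement. If desired, it follows from Theorem~\ref{NCB} by restricting to the isotropy algebra $\mathrm{Vect}_0([a,b])$.
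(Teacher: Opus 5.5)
Your proposal is correct and follows the paper's route. The paper's proof is only the remark that the cocycle identities go through exactly as in the circle case (Theorems~\ref{thm:vir-groupoid}, \ref{thm:GFA} and \ref{prop:gr-alg-cocycle}); you carry out that transfer explicitly, with the substitution $y=\eta(x)$ on $[a,b]$, the vanishing of $d\log\psi^s_x$ at $s=0$, and the wedge description of bisections, which mirrors Section~\ref{sect:brok-diff}.
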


The proof of the cocycle identities proceeds in the same manner as for broken vector fields on the circle.



\bibliographystyle{plain}
\bibliography{bibliography}

\end{document}